\numberwithin{equation}{section}
\theoremstyle{plain}
\newtheorem{theorem}{Theorem}[section]
\newtheorem{lemma}[theorem]{Lemma}
\newtheorem{corollary}[theorem]{Corollary}
\theoremstyle{definition}
\newtheorem{definition}[theorem]{Definition}
\newtheorem{remark}[theorem]{Remark}
\newcommand{\C}{\mathbb{C}}
\newcommand{\CP}{\mathbb{C}P}
\newcommand{\Q}{\mathbb{Q}}
\title[Strong Cohomological rigidity of Bott manifolds]{Strong Cohomological rigidity of Bott manifolds}
\author[S. Choi]{Suyoung Choi}
\address{Department of mathematics, Ajou University, 206, World cup-ro, Yeongtong-gu, Suwon 16499,  Republic of Korea}
\email{schoi@ajou.ac.kr}
\author[T. Hwang]{Taekgyu Hwang}
\address{Department of mathematics, Ajou University, 206, World cup-ro, Yeongtong-gu, Suwon 16499, Republic of Korea}
\email{hwangtaekkyu@gmail.com}
\author[H. Jang]{Hyeontae Jang}
\address{Department of mathematics, Ajou University, 206, World cup-ro, Yeongtong-gu, Suwon 16499, Republic of Korea}
\email{a24325@ajou.ac.kr}
\thanks{The authors were supported by the National Research Foundation of Korea Grant funded by the Korean Government (NRF-2019R1A2C2010989).}
\date{\today}
\subjclass[2020]{57S12, (57R19, 57R50)}
\begin{document}
\begin{abstract}
We show that the strong cohomological rigidity conjecture for Bott manifolds is true. Namely, any graded cohomology ring isomorphism between two Bott manifolds is induced by a diffeomorphism.
\end{abstract}
\maketitle

\section{Introduction}
    A \emph{Bott tower} of height $n$ is a sequence of projective bundles
\begin{equation} \label{eq:BTower}
	B_n \xrightarrow{\pi_n} B_{n-1} \xrightarrow{\pi_{n-1}} \cdots \xrightarrow{\pi_3}
	B_2 \xrightarrow{\pi_2} B_1 \xrightarrow{\pi_1} B_0 = \{ \text{a point} \}.
\end{equation}
    Each $B_i$ is the projectivization $\mathbb{P}(\C \oplus \gamma^{\alpha_i})$ over $B_{i-1}$, where $\gamma^{\alpha_i}$ is the complex line bundle over $B_{i-1}$ whose first Chern class is $\alpha_i$. We call the total space~$B_n$ an $n$-stage \emph{Bott manifold}.

    When Grossberg and Karshon~\cite{Grossberg-Karshon1994} introduced the notion of a Bott tower, they described a Bott manifold as a special member in a one-parameter family of complex manifolds arising from a Bott--Samelson variety~\cite{Bott-Samelson1958}. Bott manifolds carry holomorphic actions of the algebraic torus of maximal dimension and form an important family of compact smooth toric varieties, which we call toric manifolds for short.

    One remarkable result in toric topology is that the classification of toric manifolds as varieties is closely related to the equivariant cohomology~\cite{Masuda2008Adv}.
    However, not much is known about the smooth classification.
    It was shown by Hirzebruch \cite{Hirzebruch1951} that two-stage Bott manifolds, known as \emph{Hirzebruch surfaces}, are diffeomorphic if and only if their cohomology rings are isomorphic as graded rings.
    In 2008, Masuda and Panov \cite{Masuda-Panov2008} showed that if the cohomology of $B_n$ is isomorphic to that of $(\CP^1)^n$, then $B_n$ is diffeomorphic to $(\CP^1)^n$.
    Motivated by this, the following smooth classification problem was posed in \cite{Masuda-Panov2008, Masuda-Suh2008} (see also \cite[Problem~7.8.29]{Buchstaber-Panov2015}) for the class of Bott manifolds, and more generally for the class of toric manifolds.

\medskip
\noindent
{\bf Cohomological rigidity problem for toric manifolds.}
Are toric manifolds diffeomorphic if their cohomology rings are isomorphic as graded rings?

\medskip
    There is no evidence so far that the problem gets easier when we instead consider the topological classification.
    Although the cohomology ring as an invariant is too weak to determine the diffeomorphism type in general, no counterexamples are known at the time of writing.

    By restricting the problem to the class of Bott manifolds, many partial answers were given; see \cite{Choi-Masuda-Suh2010TAMS}, \cite{Choi2015PEMS}, \cite{HK_arxiv}, \cite{Choi-Masuda-Murai2015}, \cite{Ishida2012}, \cite{Ishida2} for example.
    Interestingly, some results suggested that an even stronger proposition, called the \emph{strong cohomological rigidity conjecture}, should hold: any cohomology ring isomorphism between two Bott manifolds can be induced by a diffeomorphism.
    The conjecture was confirmed to be true for $n \leq 4$ so far, whereas it does not hold for general toric manifolds (see~\cite[Theorem~10]{Friedman-Morgan1988}).

	The purpose of the paper is to show that the strong cohomological rigidity conjecture is true for all Bott manifolds of arbitrary dimension $n$. Throughout the paper, we let $H^\ast(X)$ denote the integral cohomology ring of a topological space $X$.

\begin{theorem}[Strong cohomological rigidity for Bott manifolds]\label{thm:scr}
Let $B_n$ and $B_n'$ be Bott manifolds.
Given any graded ring isomorphism $\phi \colon H^\ast(B_n) \to H^\ast(B_n')$, there exists a diffeomorphism $\Phi \colon B_n' \to B_n$ such that $\Phi^\ast = \phi$.
\end{theorem}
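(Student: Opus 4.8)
The plan is to argue by induction on the height $n$, reducing at each step to a graded ring isomorphism that is compatible with a $\CP^1$-bundle structure and then realizing it by a bundle diffeomorphism. I recall that if $B_n$ has Bott matrix $(a_{ij})$ (a strictly upper-triangular integer matrix), then
$$H^\ast(B_n)\cong\Z[x_1,\dots,x_n]\big/\bigl(x_i^2-\alpha_i x_i : 1\le i\le n\bigr),\qquad \alpha_i=\sum_{j<i}a_{ij}x_j,$$
and similarly for $B_n'$ with generators $x_1',\dots,x_n'$. Since $\operatorname{rank}_\Z H^2(B_n)=n$, the two manifolds have the same height, and a graded ring isomorphism $\phi\colon H^\ast(B_n)\to H^\ast(B_n')$ is completely recorded by the images $\phi(x_i)\in H^2(B_n')$, which must form a $\Z$-basis of $H^2(B_n')$ whose quadratic self-products mirror, under $\phi$, the defining relations of $B_n$. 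My first task would be to set up clean bookkeeping for such \emph{admissible bases} of $H^2(B_n')$, bearing in mind that it is the integral structure, not merely the rational one, that must be controlled.

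Next I would catalogue the diffeomorphisms at our disposal together with the ring isomorphisms they induce; since realizability is closed under composition and inversion, it then suffices to factor an arbitrary $\phi$ into realizable pieces. These pieces are of two kinds: (i) operations internal to the bundle at a fixed stage $i$ --- the fiberwise involution, swapping the summands of $\C\oplus\gamma^{\alpha_i}$, and renormalizations $\mathbb{P}(\C\oplus\gamma)\cong\mathbb{P}(L\oplus L\otimes\gamma)$ --- whose effect is of the form $x_i\mapsto\pm x_i+(\text{a class pulled back from }B_{i-1})$ with the higher relations re-expressed accordingly; and (ii) reorderings, that is, identifications of $B_n$ with a different iterated $\CP^1$-bundle over a different but diffeomorphic base. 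Establishing the corresponding realizability lemmas --- the one for reorderings being the delicate one --- is a prerequisite for the main induction.

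The inductive step is the core. A $\CP^1$-bundle projection $B_n\to X$ is recorded cohomologically by the subring of $H^\ast(B_n)$ generated by the degree-two classes that restrict trivially to a fiber, which equals the image of $H^\ast(X)$ for $X$ a Bott manifold of height $n-1$; the reordering diffeomorphisms of type (ii) act on the collection of these \emph{bundle subrings}. The goal is to show that, after post-composing $\phi$ with suitable moves of types (i) and (ii), $\phi$ carries the standard bundle subring $H^\ast(B_{n-1})\subset H^\ast(B_n)$ onto $H^\ast(B_{n-1}')\subset H^\ast(B_n')$. Granting this, $\phi$ restricts to a graded ring isomorphism $H^\ast(B_{n-1})\to H^\ast(B_{n-1}')$, which by the inductive hypothesis is induced by a diffeomorphism $f\colon B_{n-1}'\to B_{n-1}$; pulling back $\gamma^{\alpha_n}$ along $f$ and using one last flip and renormalization to match the remaining fiber-direction freedom lifts $f$ to a bundle diffeomorphism $B_n'\to B_n$ inducing the original $\phi$. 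The base case $n\le1$ is immediate, and the settled low-height cases serve as a check.

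I expect the normalization inside the inductive step to be the main obstacle. A general ring isomorphism may recombine all of the stages, so one cannot avoid classifying the admissible bases of $H^2(B_n')$ and computing the orbit of the standard basis under the realizable moves; concretely, the hard part is to enumerate the alternative fibration structures on a given Bott manifold and to prove that every one of them --- and every change of admissible basis generated by the flips, renormalizations, and reorderings --- is realized by a diffeomorphism. I anticipate this needs a workable normal form for Bott matrices and an induction on $n$ interlocked with the outer one; by contrast the type-(i) moves are benign, since their action on both the Bott matrix and on $H^\ast$ is explicit.
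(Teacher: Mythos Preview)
Your overall architecture matches the paper's: induction on $n$, a catalogue of realizable moves (your type~(i) renormalizations are the paper's twisting Lemma~\ref{lem:twist}, your type~(ii) reorderings its switching Lemma~\ref{lem:switch}), and reduction to a filtered isomorphism followed by an Ishida-type lifting. The gap is entirely in the normalization step, which you correctly flag as the main obstacle but for which you offer no mechanism beyond ``classify the admissible bases and compute the orbit.'' The paper supplies precisely this mechanism, and it is not a classification: it is a parity-driven algorithm (Lemma~\ref{lem:key}) that, given a $k$-stable $\phi$ with $\phi(x_{k+1})\in F_\ell(B)$ and $\ell>k+1$, lowers $\ell$ by one or two by examining the parity of the entry $b_{\ell,\ell-1}$ and applying a tailored twist-then-switch. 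The odd-parity case is delicate and requires the $\Q$-trivial decomposition of Section~\ref{sec:decomposition} --- specifically the level function of Lemma~\ref{lem:stable} and the block structure of Lemma~\ref{lem:block} and Corollary~\ref{cor:block} --- to guarantee that after working on \emph{both} the source and target towers the obstructing entry becomes even. None of this structure is visible in your plan, and a bare enumeration of fibration structures would not produce it.

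There is also a sharper issue. Your inductive step aims to make $\phi$ $(n-1)$-stable and then lift a base diffeomorphism through a single $\CP^1$-bundle. The paper's algorithm does \emph{not} always reach $(n-1)$-stability; Theorem~\ref{thm:main} only guarantees that repeated application terminates at $k=n-1$ or $k=n-2$, and the proof genuinely invokes Ishida's second theorem (the $k=n-2$ case of Theorem~\ref{thm:Ishida}, treating Hirzebruch-surface bundles rather than $\CP^1$-bundles) to finish. So either you would need a new argument that $(n-1)$-stability is always achievable via twist and switch --- which the paper does not claim and your proposal does not supply --- or your inductive step must be reformulated to allow the base to drop by two stages and appeal to the harder lifting result.
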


	One of the key ingredients for the proof of Theorem~\ref{thm:scr} is Ishida's theorem (Theorem~\ref{thm:Ishida}) on the construction of the diffeomorphism. To apply his theorem, one needs to assume that $\phi$ is $k$-stable (see Definition~\ref{def:stable}) for $k=n-2$ or $k=n-1$. What we prove in this paper is that, by changing the tower structures of $B_n$ and $B'_n$ through diffeomorphisms, such assumption can always be made; see Theorem~\ref{thm:main} for the statement.

    Note that an $n$-stage Bott manifold is a toric manifold over an $n$-cube $I^n$.
    Hasui et al. \cite{HKMP2020} studied the cohomological rigidity for toric manifolds over an $n$-cube with one vertex cut, say $vc(I^n)$, assuming the cohomological rigidity for Bott manifolds.
    By Remark in \cite[p.4892]{HKMP2020}, we have the following non-trivial corollary.

    \begin{corollary}
        Let $X$ and $Y$ be toric manifolds over $vc(I^n)$.
        If $H^\ast(X) \cong H^\ast(Y)$ as graded rings, then $X$ and $Y$ are diffeomorphic.
    \end{corollary}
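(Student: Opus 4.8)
The plan is to deduce this Corollary directly from Theorem~\ref{thm:scr} together with the reduction already carried out in \cite{HKMP2020}. Hasui et al. study toric manifolds over the polytope $vc(I^n)$ obtained by cutting off a single vertex of the $n$-cube, and in the Remark on \cite[p.~4892]{HKMP2020} they establish a \emph{conditional} solution of the cohomological rigidity problem for this class: any graded ring isomorphism between the cohomology rings of two such toric manifolds is realized by a diffeomorphism, \emph{provided} that the cohomological rigidity conjecture holds for Bott manifolds. Their argument splits into cases according to the combinatorial type of the characteristic data, and in each case either exhibits an explicit diffeomorphism or reduces the comparison to a pair of Bott manifolds with abstractly isomorphic graded cohomology rings.

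First I would extract from \cite{HKMP2020} the precise statement of this conditional theorem and confirm that the hypothesis it uses is exactly ``two Bott manifolds with isomorphic graded cohomology rings are diffeomorphic.'' Then I would invoke Theorem~\ref{thm:scr}: any graded ring isomorphism $H^\ast(B_n)\to H^\ast(B_n')$ is induced by a diffeomorphism, so in particular $B_n$ and $B_n'$ are diffeomorphic whenever $H^\ast(B_n)\cong H^\ast(B_n')$. This supplies the missing input, so the conditional theorem of \cite{HKMP2020} becomes unconditional; applying it to $X$ and $Y$ yields the desired diffeomorphism.

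The only genuine obstacle here is Theorem~\ref{thm:scr} itself, whose proof occupies the remainder of the paper; given that, the Corollary needs no further ideas beyond citing \cite{HKMP2020}. The one point worth checking is that \cite{HKMP2020} requires only plain cohomological rigidity for Bott manifolds, which is formally weaker than Theorem~\ref{thm:scr}; should their reduction instead invoke the strong form, Theorem~\ref{thm:scr} provides that as well, so the conclusion holds in either case.
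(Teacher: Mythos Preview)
Your proposal is correct and matches the paper's approach exactly: the paper does not give a separate proof of this corollary, but simply invokes the Remark in \cite[p.~4892]{HKMP2020} together with Theorem~\ref{thm:scr}, precisely as you describe. Your additional observation that only ordinary (not strong) cohomological rigidity is needed from \cite{HKMP2020} is a fair clarification, and the paper does not elaborate beyond the citation.
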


\section{Preliminaries}\label{sec:preliminary}
In this section, we fix notation that will be used for the rest of the paper and collect some results on the realization of an isomorphism between cohomology rings of Bott manifolds by a diffeomorphism.

\subsection{Notation}
Let $B_n$ be a Bott manifold defined in~\eqref{eq:BTower}. By Leray--Hirsch theorem, we have the following presentation of the cohomology ring
\begin{equation}\label{eq:coh}
	H^\ast(B_n) = \mathbb{Z}[x_1, \dots, x_n] / (x_i^2 - \alpha_i x_i)_{i=1}^n,
\end{equation}
where
\begin{equation}\label{eq:alpha}
	\alpha_i = \sum_{j=1}^{i-1} a_{ij} x_j, \quad a_{ij} \in \mathbb{Z}.
\end{equation}
Once this presentation is fixed, or equivalently the strictly lower triangular integer matrix $A = (a_{ij})_{n \times n}$ is given, we let $B_n(A)$ denote the Bott tower~\eqref{eq:BTower}. By slight abuse of the notation, the symbol $B_n(A)$ will also denote the Bott manifold $B_n$. Define the filtration
\begin{equation}\label{eq:filtration}
	F_k(A) := \mathrm{span} \{ x_1, \dots x_k \} \subset H^2(B_n(A))
\end{equation}
with the convention $F_0(A) := \{ 0 \}$. For given $\eta \in H^2(B_n(A))$, the \emph{height} of~$\eta$ is defined as
\begin{equation}
	\mathrm{ht}(\eta):= \min \{ k \mid \eta \in F_k(A) \}.
\end{equation}
\begin{remark}\label{rmk:zero}
	The expression~\eqref{eq:coh} shows that $\{x_i x_j\}_{i<j}$ forms an additive basis for $H^4(B_n(A))$. In particular, for $\eta_i \in H^2(B_n(A))$ with $\mathrm{ht}(\eta_i)<i$,
	\begin{equation}
		\sum_i \eta_i x_i = 0 \implies \eta_i =0 \quad \text{for all } i.
	\end{equation}
\end{remark}

Throughout the paper, we will use $A = (a_{ij})$ for the source and $B = (b_{ij})$ for the target. To avoid confusion, we will use the symbols $y_i$ and $\beta_i$ in place of $x_i$ and $\alpha_i$ for the matrix $B$.
\begin{definition}\label{def:stable}
	A graded ring isomorphism $\phi \colon H^\ast(B_n(A)) \to H^\ast(B_n(B))$ is called \emph{$k$-stable} if $\phi(F_k(A)) \subset F_k(B)$ for some $k$.
\end{definition}

\subsection{Realizable isomorphisms}
We say that an isomorphism between two cohomology rings is \emph{realizable} if it is induced from a diffeomorphism.

Let $B = (b_{ij})$ be an $n \times n$ matrix defining a Bott tower. In Section~\ref{sec:proof}, using the following two matrix operations, we will produce a new matrix $B' = (b'_{ij})$ that is better suited for the proof of Theorem~\ref{thm:main}. We provide the proofs for the reader's convenience. Following the expression~\eqref{eq:alpha}, we will use $\beta_i$ to express the $i$th row of $B$.

\begin{lemma}[Switching]\label{lem:switch}
	Suppose $b_{j+1, j} = 0$ for some $j$. Let $B'$ be the matrix obtained from $B$ by exchanging the $j$th and the $(j+1)$st rows and columns. Then the isomorphism $g \colon H^\ast(B_n(B)) \to H^\ast(B_n(B'))$ defined by
	\[
		g(y_j) = y'_{j+1}, \quad g(y_{j+1}) = y'_j, \quad \text{and} \quad g(y_i) = y'_i \text{ for } i \neq j, j+1
	\]
	is realizable.
\end{lemma}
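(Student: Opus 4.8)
The plan is to exhibit an explicit diffeomorphism $B_n(B) \to B_n(B')$ realizing $g$, by observing that when $b_{j+1,j} = 0$ the two projective bundles at stages $j$ and $j+1$ "commute" in the sense that the relevant line bundles over $B_{j-1}(B)$ admit independent projectivizations. First I would note that since $b_{j+1,j}=0$, the class $\beta_{j+1}$ lies in $F_{j-1}(B)$; hence the line bundle $\gamma^{\beta_{j+1}}$ is pulled back from $B_{j-1}(B)$. Therefore over $B_{j-1}(B)$ we have two line bundles $\gamma^{\beta_j}$ and $\gamma^{\beta_{j+1}}$, and $B_{j+1}(B)$ is the iterated projectivization $\mathbb{P}(\C \oplus \gamma^{\beta_{j+1}})$ over $\mathbb{P}(\C \oplus \gamma^{\beta_j})$ over $B_{j-1}(B)$, where in the second step $\gamma^{\beta_{j+1}}$ is understood as pulled back. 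The key geometric fact is that this iterated bundle is canonically the fiber product over $B_{j-1}(B)$ of $\mathbb{P}(\C \oplus \gamma^{\beta_j})$ and $\mathbb{P}(\C \oplus \gamma^{\beta_{j+1}})$, which is manifestly symmetric in $j \leftrightarrow j+1$. Swapping the two factors gives a diffeomorphism $B_{j+1}(B) \to B_{j+1}(B')$ over $B_{j-1}(B)$, and the remaining stages (for indices $> j+1$) have Chern-class data obtained precisely by the column swap, so the diffeomorphism extends stage by stage to a diffeomorphism $\Phi \colon B_n(B) \to B_n(B')$.

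Next I would check that $\Phi^\ast = g$ on cohomology. Under the fiber-product description, $y_j \in H^2(B_n(B))$ is the pullback of the tautological class from the factor $\mathbb{P}(\C \oplus \gamma^{\beta_j})$ and $y_{j+1}$ is the pullback from the other factor; swapping the factors interchanges these two tautological classes and fixes everything pulled back from $B_{j-1}(B)$ as well as the tautological classes introduced at later stages. This is exactly the formula $g(y_j) = y'_{j+1}$, $g(y_{j+1}) = y'_j$, $g(y_i) = y'_i$ otherwise. One should verify consistency with the later Chern classes: for $i > j+1$, the coefficient $b_{ij}$ becomes the coefficient of $y'_{j+1}$ and $b_{i,j+1}$ becomes the coefficient of $y'_j$, which is precisely the effect of exchanging columns $j$ and $j+1$ in $B$; hence $g$ does send the defining relations of $H^\ast(B_n(B))$ to those of $H^\ast(B_n(B'))$, confirming $g$ is a well-defined ring isomorphism and that $\Phi^\ast$ agrees with it.

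I expect the main obstacle to be purely bookkeeping rather than conceptual: one must carefully track that exchanging rows and columns $j, j+1$ of $B$ does not disturb the strictly-lower-triangular structure (it does not, precisely because $b_{j+1,j}=0$, so the swapped entry in position $(j,j+1)$ of the new matrix is $b_{j+1,j}=0$), and that the base $B_{j-1}(B)$ and the tower above stage $j+1$ are literally unchanged, so the diffeomorphism really is the identity on those pieces. An alternative, even cleaner route is to invoke the classification of $\mathbb{P}(\C \oplus L)$-bundles: a projective bundle $\mathbb{P}(\C \oplus L) \to X$ depends only on $L$ up to isomorphism and the construction is natural, so the two orders of projectivizing $\gamma^{\beta_j}$ and $\gamma^{\beta_{j+1}}$ over $B_{j-1}(B)$ yield canonically diffeomorphic total spaces; this avoids writing down the fiber product explicitly. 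Either way, the argument is short, and the only care needed is in matching the cohomology classes with the combinatorial row/column swap.
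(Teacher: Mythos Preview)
Your proposal is correct and follows essentially the same approach as the paper: recognize that $b_{j+1,j}=0$ makes $\gamma^{\beta_{j+1}}$ a pullback from $B_{j-1}$, identify $B_{j+1}$ with the fiber product of $\mathbb{P}(\C\oplus\gamma^{\beta_j})$ and $\mathbb{P}(\C\oplus\gamma^{\beta_{j+1}})$ over $B_{j-1}$, and swap the two factors. The paper's proof is terser and omits the bookkeeping you spell out (extension to higher stages, verification of $\Phi^\ast=g$, preservation of lower-triangularity), but the geometric core is identical.
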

\begin{proof}
	Since $b_{j+1,j} = 0$, the line bundle $\gamma^{\beta_{j+1}}$ can be regarded as a bundle over $B_{j-1}$. Then the space $B_{j+1}$ is the fiber product of $\mathbb{P}(\mathbb{C} \oplus \gamma^{\beta_j}) \to B_{j-1}$ and $\mathbb{P}(\mathbb{C} \oplus \gamma^{\beta_{j+1}}) \to B_{j-1}$. The diffeomorphism is given by switching the two factors of the fiber product.
\end{proof}

\begin{lemma}[Twisting]\label{lem:twist}
	Suppose there exists $v \in F_{j-1}(B)$ satisfying the equation $v(\beta_j - v) = 0$. Then there exists $B'$ such that
	\[
		\beta'_i = \beta_i \text{ for } i<j \quad \text{and} \quad \beta'_j = \beta_j - 2v
	\]
	and a realizable graded ring isomorphism $g \colon H^\ast(B_n(B)) \to H^\ast(B_n(B'))$ which is the identity when restricted to $F_{j-1}(B)$.
\end{lemma}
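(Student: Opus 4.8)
\emph{Outline and Step 1 (the isomorphism and the matrix).}
The plan is to write $g$ and $B'$ down explicitly, reduce realizability to the $j$-th stage of the tower, and there construct the diffeomorphism from a sphere–bundle model; the real work sits in one bundle-classification statement over $B_{j-1}$. Concretely, set $g(y_i)=y_i'$ for $i\neq j$ and $g(y_j)=y_j'+v$, where $v$ is moved into $F_{j-1}(B')=F_{j-1}(B)$ by the identity. Then $g$ preserves the defining relation for $i<j$ trivially, and at $i=j$ one has $g(y_j^2-\beta_j y_j)=(y_j'+v)^2-\beta_j(y_j'+v)=v^2-\beta_j v$, which vanishes in $H^\ast(B_n(B'))$ \emph{because} $v(\beta_j-v)=0$. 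For $i>j$ the element $g(\beta_i)=\sum_{k<i}b_{ik}y_k'+b_{ij}v$ still lies in $F_{i-1}(B')$, so \emph{defining} $\beta_i':=g(\beta_i)$ makes $g$ send the $i$-th relation to $0$; reading off coefficients yields a strictly lower triangular $B'$ with $\beta_i'=\beta_i$ for $i<j$ and $\beta_j'=\beta_j-2v$. The inverse is $g^{-1}(y_j')=y_j-v$, $g^{-1}(y_i')=y_i$ $(i\neq j)$, checked the same way, so $g$ is a graded ring isomorphism that restricts to the identity on $F_{j-1}(B)$. What remains is to realize $g$.

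\emph{Step 2 (reduction to stage $j$).}
Since $g(F_k(B))\subseteq F_k(B')$ for $k\ge j$, the map $g$ restricts to a ring isomorphism $g_j\colon H^\ast(B_j(B))\to H^\ast(B_j(B'))$. I claim it suffices to realize $g_j$ by a diffeomorphism $G_j\colon B_j(B')\to B_j(B)$ covering $\mathrm{id}_{B_{j-1}}$. Indeed then $G_j^\ast\gamma^{\beta_{j+1}}=\gamma^{g_j(\beta_{j+1})}=\gamma^{\beta_{j+1}'}$, so $G_j^\ast\mathbb{P}(\C\oplus\gamma^{\beta_{j+1}})\cong\mathbb{P}(\C\oplus\gamma^{\beta_{j+1}'})=B_{j+1}(B')$ as bundles over $B_j(B')$, and $G_j$ lifts to a bundle isomorphism $G_{j+1}\colon B_{j+1}(B')\to B_{j+1}(B)$ with $G_{j+1}^\ast=g_{j+1}$. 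Iterating up both towers produces $G_n\colon B_n(B')\to B_n(B)$ with $G_n^\ast=g$.

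\emph{Step 3 (the diffeomorphism at stage $j$).}
The fiberwise Hopf map is $U(1)$-equivariant for the structure group, hence globalizes to a diffeomorphism $\mathbb{P}(\C\oplus\mathcal{L})\cong S(\underline{\mathbb{R}}\oplus\mathcal{L}_{\mathbb{R}})$, the unit sphere bundle of the oriented real rank-$3$ bundle $\underline{\mathbb{R}}\oplus\mathcal{L}_{\mathbb{R}}$. So $B_j(B)\cong S(\underline{\mathbb{R}}\oplus(\gamma^{\beta_j})_{\mathbb{R}})$ and $B_j(B')\cong S(\underline{\mathbb{R}}\oplus(\gamma^{\beta_j-2v})_{\mathbb{R}})$ over $B_{j-1}$. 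These two rank-$3$ bundles have equal $w_1,w_2$ (as $\beta_j\equiv\beta_j-2v\pmod 2$), equal $w_3=0$, and, since $p_1(\underline{\mathbb{R}}\oplus(\gamma^\delta)_{\mathbb{R}})=\delta^2$ and $(\beta_j-2v)^2-\beta_j^2=-4v(\beta_j-v)=0$, equal $p_1$ — this is exactly where the hypothesis is used. Granting that these invariants force the two bundles to be isomorphic over $B_{j-1}$ (the obstacle below), a bundle isomorphism induces a fiber-bundle diffeomorphism $G_j$ of the sphere bundles over $B_{j-1}$. As $G_j$ covers $\mathrm{id}_{B_{j-1}}$, $G_j^\ast$ fixes $F_{j-1}$, and a short computation (using $v(\beta_j-v)=0$ and that $H^2(B_{j-1})$ is free abelian) shows the only ring isomorphisms $H^\ast(B_j(B))\to H^\ast(B_j(B'))$ fixing $F_{j-1}$ are $g_j$ and $\rho^\ast\circ g_j$, where $\rho$ is the fiberwise reflection $(t,w)\mapsto(-t,w)$; so after possibly composing $G_j$ with $\rho$ we may assume $G_j^\ast=g_j$, finishing the proof modulo the input of this step.

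\emph{The hard part.}
The one nontrivial input is the claim that $(w_1,w_2,w_3,p_1)$ determines $\underline{\mathbb{R}}\oplus(\gamma^{\beta_j})_{\mathbb{R}}$ among oriented real rank-$3$ bundles over $B_{j-1}$. For $\dim B_{j-1}\le 4$ (that is, $j\le 3$) this is classical obstruction theory, but for larger $j$ the obstructions to an isomorphism live a priori in $H^{2k}(B_{j-1};\pi_{2k-1}(SO(3)))$ with $\pi_{2k-1}(SO(3))$ a nonzero finite $2$-group, and these cohomology groups need not vanish. The route I would take is to exploit the special form of the bundles: both are pulled back from $BSO(2)$ along $c_1$, so the difference of their classifying maps factors through $BSO(2)\to BSO(3)$, whose homotopy fibre is $S^2$, and combined with the fact that $B_{j-1}$ has cohomology concentrated in even degrees this should kill the higher obstructions. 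Equivalently one may show that the complex rank-$2$ bundles $\C\oplus\gamma^{\beta_j}$ and $\gamma^v\oplus\gamma^{\beta_j-v}$ — which have equal $c_1$, and $c_2$ equal to $v(\beta_j-v)=0$ — are isomorphic over $B_{j-1}$; this I would attempt by induction on the tower structure of $B_{j-1}$, splitting the relation $v(\beta_j-v)=0$ stage by stage, with the top stage handled by the classical diffeomorphism $\Sigma_a\cong\Sigma_{a+2}$ between Hirzebruch surfaces. I expect this classification statement to be the main obstacle.
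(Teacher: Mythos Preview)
Your Steps~1 and~2 are correct, and Step~3 correctly reduces everything to a bundle-classification statement over $B_{j-1}$. What you flag as ``the hard part'' is exactly the input the paper uses, but the paper resolves it by citation rather than by argument: Ishida \cite[Theorem~3.1]{Ishida2012} shows that two \emph{decomposable} complex rank-$2$ bundles over a Bott manifold are isomorphic if and only if their Chern classes agree. Since $\mathbb{C}\oplus\gamma^{\beta_j}$ and $\gamma^{v}\oplus\gamma^{\beta_j-v}$ both have $c_1=\beta_j$ and $c_2=0$ (the latter precisely by the hypothesis $v(\beta_j-v)=0$), they are isomorphic; tensoring by $\gamma^{-v}$, which never changes a projectivization, then gives
\[
\mathbb{P}(\mathbb{C}\oplus\gamma^{\beta_j})\;\cong\;\mathbb{P}(\gamma^{v}\oplus\gamma^{\beta_j-v})\;\cong\;\mathbb{P}(\mathbb{C}\oplus\gamma^{\beta_j-2v}),
\]
and pulling the rest of the tower back along this diffeomorphism produces $B'$ and $g$.

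So your proposal is sound and converges to the same key statement the paper needs, but the passage through the sphere-bundle model $S(\underline{\mathbb{R}}\oplus\mathcal{L}_{\mathbb{R}})$ and $SO(3)$ obstruction theory is a detour: once you recast the question, as you do in your final paragraph, as an isomorphism of the two complex rank-$2$ bundles, you have recovered the paper's argument verbatim, and Ishida's theorem finishes it. Your proposed inductive attack on that classification (stage by stage, topped off by the Hirzebruch-surface diffeomorphism) is a reasonable plan and close in spirit to how such results are proved, but there is no need to redo it here. The extra care you take in Step~3 to pin down $G_j^\ast$ up to the fiberwise reflection $\rho$ is also more than the lemma demands: the statement only asks that $g$ restrict to the identity on $F_{j-1}$, which is automatic for any bundle map covering $\mathrm{id}_{B_{j-1}}$.
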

\begin{proof}
	Ishida~\cite[Theorem 3.1]{Ishida2012}	showed that rank $2$ decomposable vector bundles over a Bott manifold are isomorphic if and only if they have the same Chern classes. Since a projective bundle is unchanged under tensoring by a line bundle, we have the following isomorphisms of bundles:
	\[
		\mathbb{P}(\mathbb{C} \oplus \gamma^{\beta_j - 2v}) \cong \mathbb{P}((\mathbb{C} \oplus \gamma^{\beta_j - 2v}) \otimes \gamma^v) \cong \mathbb{P}(\gamma^v \oplus \gamma^{\beta_j - v}) \cong \mathbb{P}(\mathbb{C} \oplus \gamma^{\beta_j}).
	\]
	By pulling the bundle $B_n(B) \to \mathbb{P}(\mathbb{C} \oplus \gamma^{\beta_j})$ back by the above diffeomorphism, we obtain a new bundle $B_n(B') \to \mathbb{P}(\mathbb{C} \oplus \gamma^{\beta_j - 2v})$. The desired isomorphism $g$ is induced from the bundle isomorphism.
\end{proof}

Finally, we introduce the following theorem by Ishida that will be combined with Theorem~\ref{thm:main} to prove the strong cohomological rigidity.

\begin{theorem}[\cite{Ishida2012} for $k=n-1$ and \cite{Ishida2} for $k=n-2$]\label{thm:Ishida}
	Suppose that a graded ring isomorphism $\phi \colon H^\ast(B_n(A)) \to H^\ast(B_n(B))$ is $k$-stable and realizable. Then there exists a diffeomorphism $\Phi \colon B_n(B) \to B_n(A)$ such that $\Phi^\ast = \phi$.
\end{theorem}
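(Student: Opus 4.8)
Since Theorem~\ref{thm:Ishida} is quoted from Ishida's papers, the argument is not reproduced here; I only outline the strategy one would follow. The plan is to induct on the height $n$: use the $k$-stability of $\phi$ to descend it to an isomorphism of the cohomology rings of lower-height Bott towers, realize that isomorphism by a diffeomorphism through the inductive hypothesis, and then lift this diffeomorphism across the remaining top stage(s). The bundle-level inputs for the lifting are Lemmas~\ref{lem:switch} and~\ref{lem:twist} together with Ishida's rigidity of rank-two decomposable bundles over Bott manifolds (the result used inside the proof of Lemma~\ref{lem:twist}).

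Consider first $k = n-1$. Since $\phi$ restricts to an isomorphism on $H^2$ carrying $F_{n-1}(A)$ into $F_{n-1}(B)$, its matrix with respect to $x_1, \dots, x_n$ and $y_1, \dots, y_n$ is block upper triangular, and $\det\phi = \pm 1$ forces both $\phi(F_{n-1}(A)) = F_{n-1}(B)$ and $\phi(x_n) = c\,y_n + w$ with $c = \pm 1$, $w \in F_{n-1}(B)$. The subring of $H^\ast(B_n(A))$ generated by $F_{n-1}(A)$ is the image of $H^\ast(B_{n-1}(A))$, as one reads off from~\eqref{eq:coh}, so $\phi$ descends to an isomorphism $\bar\phi\colon H^\ast(B_{n-1}(A)) \to H^\ast(B_{n-1}(B))$, which by the inductive hypothesis equals $\bar\psi^\ast$ for some diffeomorphism $\bar\psi\colon B_{n-1}(B) \to B_{n-1}(A)$. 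Using Lemma~\ref{lem:twist} in the last row with $v = \beta_n$ (which interchanges the two summands of $\mathbb{P}(\mathbb{C}\oplus\gamma^{\beta_n})$) we may assume $c = +1$. Then expanding the relation $\phi(x_n)^2 = \phi(\alpha_n)\phi(x_n)$ and comparing coefficients as in Remark~\ref{rmk:zero} gives $\phi(\alpha_n) = \beta_n + 2w$ and $w(\beta_n + w) = 0$, so Lemma~\ref{lem:twist} with $v = -w$ produces a matrix $B'$ with $B_{n-1}(B') = B_{n-1}(B)$, $\beta'_n = \phi(\alpha_n) = \bar\psi^\ast\alpha_n$, and a realizable isomorphism $H^\ast(B_n(B)) \to H^\ast(B_n(B'))$ that is the identity on $F_{n-1}$. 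Now $B_n(B') = \mathbb{P}(\mathbb{C}\oplus\gamma^{\bar\psi^\ast\alpha_n})$ is precisely the pullback of the bundle $B_n(A) \to B_{n-1}(A)$ along $\bar\psi$, so the canonical bundle map $B_n(B') \to B_n(A)$ is a diffeomorphism; composing it with the realizable isomorphisms introduced above and tracking what each factor does on generators, one checks the resulting diffeomorphism $B_n(B) \to B_n(A)$ induces exactly $\phi$.

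For $k = n-2$ the same plan is followed, but $(n-2)$-stability only allows $\phi$ to descend to $\bar\phi\colon H^\ast(B_{n-2}(A)) \to H^\ast(B_{n-2}(B))$, realized by some diffeomorphism $\bar\psi$ through the inductive hypothesis; one must then lift $\bar\psi$ across the two top stages at once. The fiber over $B_{n-2}$ is now a height-two Bott manifold, and matching the total space forces one to re-present the relevant decomposable bundle data over $B_{n-2}$ by tensoring with line bundles before the concluding coordinate bookkeeping. I expect this $k = n-2$ case to be the main obstacle: the two top stages are entangled, so one cannot lift a single stage at a time; one must adjust the rank-two and the line bundle data together so that the composite projective-bundle structure matches, and then confirm that composing all the auxiliary realizable isomorphisms reproduces $\phi$ itself and not merely some isomorphism. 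This is precisely the additional content of~\cite{Ishida2} over~\cite{Ishida2012}.
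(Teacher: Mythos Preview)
The paper does not prove Theorem~\ref{thm:Ishida}; it is stated with attribution to~\cite{Ishida2012} and~\cite{Ishida2} and invoked as a black box in the final argument. You correctly identify this and supply a sketch that goes beyond what the paper itself contains.

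Your outline for $k=n-1$ is accurate and matches Ishida's approach in~\cite{Ishida2012}: descend $\phi$ along $F_{n-1}$, realize the descended isomorphism $\bar\phi$ by a diffeomorphism $\bar\psi$, extract $\phi(\alpha_n)=\beta_n+2w$ and $w(\beta_n+w)=0$ from the relation $\phi(x_n)^2=\phi(\alpha_n)\phi(x_n)$, and twist the top stage so it becomes the pullback bundle along $\bar\psi$. The computation you give is correct. One point to make explicit: when you invoke ``the inductive hypothesis'' to realize $\bar\phi$, this must be the induction on strong cohomological rigidity (as in the paper's final proof), not an induction internal to Theorem~\ref{thm:Ishida} alone, since the descended $\bar\phi$ inherits no stability hypothesis and so Theorem~\ref{thm:Ishida} at height $n-1$ would not apply to it directly. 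This is also why the word ``realizable'' in the theorem's hypothesis should be read as applying to the restriction $\phi_k$, which is how the paper uses the result. For $k=n-2$ you rightly flag that the two top stages are entangled and defer the details to~\cite{Ishida2}; the paper does the same.
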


\section{$\Q$-trivial decomposition}\label{sec:decomposition}
A Bott manifold is called \emph{$\Q$-trivial} if its rational cohomology ring is isomorphic to that of the product of the complex projective lines. Such Bott manifolds were introduced and classified in~\cite{Choi-Masuda2012}. In this section, we decompose general Bott towers into $\Q$-trivial ones and study how the decomposition plays a role in the cohomology ring.

\subsection{$\Q$-trivial Bott manifolds}
We begin with the lemma that classifies the square zero elements of degree~$2$ in the cohomology ring. Such elements generate the cohomology ring when the Bott manifold is $\Q$-trivial.
\begin{lemma}\label{lem:sz}
	Any element $z \in H^2(B_n(A))$ satisfying $z^2 = 0$ is of the form
	\[
		z = \lambda(2x_i - \alpha_i)
	\]
	with $\alpha_i^2 = 0$ for some $i$ and $\lambda \in \Q$.
\end{lemma}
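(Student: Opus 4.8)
The plan is to expand $z^2$ in the additive basis $\{x_i x_j\}_{i<j}$ of $H^4(B_n(A))$ supplied by Remark~\ref{rmk:zero} and to read off the vanishing conditions on the coefficients of $z$. Write $z = \sum_{i=1}^n c_i x_i$. Using the relation $x_i^2 = \alpha_i x_i = \sum_{j<i} a_{ij} x_j x_i$ coming from \eqref{eq:coh} and \eqref{eq:alpha}, a direct computation gives that the coefficient of $x_j x_i$ (for $j<i$) in $z^2$ is $c_i^2 a_{ij} + 2 c_i c_j = c_i(c_i a_{ij} + 2 c_j)$. Hence $z^2 = 0$ is equivalent to the system
\[
	c_i(c_i a_{ij} + 2 c_j) = 0 \qquad \text{for all } j < i.
\]

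Next I would isolate the top nonzero coefficient. If $z = 0$ the claim holds with $\lambda = 0$ and $i = 1$, since $\alpha_1 = 0$. Otherwise let $m$ be the largest index with $c_m \neq 0$. Applying the above equations to the pairs $(j,m)$ with $j<m$ and cancelling $c_m$, we obtain $c_j = -\tfrac{1}{2} c_m a_{mj}$ for every $j < m$, whence
\[
	z = c_m x_m + \sum_{j<m} c_j x_j = c_m x_m - \tfrac{1}{2} c_m \sum_{j<m} a_{mj} x_j = \tfrac{c_m}{2}(2x_m - \alpha_m),
\]
which is exactly the asserted form with $\lambda = c_m/2 \in \Q$ and $i = m$. (Integrality of the $c_j$ forces $\tfrac{c_m}{2} a_{mj}\in\mathbb{Z}$, consistent with $z\in H^2$.)

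Finally I would verify the square-zero condition on $\alpha_m$. Expanding and using $x_m^2 = \alpha_m x_m$,
\[
	(2x_m - \alpha_m)^2 = 4 x_m^2 - 4 x_m \alpha_m + \alpha_m^2 = \alpha_m^2,
\]
so $0 = z^2 = \tfrac{c_m^2}{4}\alpha_m^2$, and since $c_m \neq 0$ we get $\alpha_m^2 = 0$, as required. (The remaining equations of the system, those for pairs $j<i$ with $i<m$, then hold automatically, as they must since $z^2=0$.)

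The argument is entirely elementary; the only real idea is that passing to the top nonzero coefficient $c_m$ decouples the quadratic system and pins $z$ down to a single-index expression. I expect no genuine obstacle beyond bookkeeping with indices in the expansion of $z^2$.
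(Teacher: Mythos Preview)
Your proof is correct and follows essentially the same approach as the paper's: both isolate the top nonzero coefficient of $z$ and use the additive basis $\{x_ix_j\}_{i<j}$ (Remark~\ref{rmk:zero}) to force the remaining coefficients and deduce $\alpha_m^2=0$. The paper simply packages your computation more tersely by writing $z=\lambda(2x_i+u)$ with $\mathrm{ht}(u)<i$ and reading off $u=-\alpha_i$, $u^2=0$ directly, rather than expanding coefficient by coefficient.
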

\begin{proof}
	For any $z \neq 0$,	write $z = \lambda(2x_i + u)$ for some nonzero $\lambda \in \Q$, $i>0$ and $u \in H^2(B_n(A))$ with $\mathrm{ht}(u)<i$. Since $\mathrm{ht}(x_i) = i$, it follows from Remark~\ref{rmk:zero} and the expression~\eqref{eq:coh} that $u= -\alpha_i$ and $u^2=0$.
\end{proof}

Given an integer $j \geq 1$, let $\mathcal{H}_j$ denote the Bott tower determined by the following $j \times j$ matrix
\begin{equation}\label{eq:qmatrix}
	\begin{pmatrix}
		0 & 0 & \dots & 0 \\
		1 & 0 & \dots & 0 \\
		\vdots & \vdots & \ddots & \vdots\\
		1 & 0 & \dots & 0
	\end{pmatrix}.
\end{equation}
By the classification result of $\Q$-trivial Bott manifolds~\cite[Theorem~4.1]{Choi-Masuda2012},
\begin{equation}\label{eq:qtrivial}
	B_n(A) \cong \prod_{i=1}^t \mathcal{H}_{\lambda_i}
\end{equation}
for some partition $\lambda = (\lambda_1, \dots, \lambda_t)$ of $n$ when $B_n(A)$ is $\Q$-trivial.

\subsection{Towers of $\Q$-trivial manifolds}

Given a matrix $A$ defining a Bott tower, one can construct a new one by taking a submatrix of A.
For any integer $1 \leq k < n$, let $\hat{A}$ and $\bar{A}$ denote the $k \times k$ upper left matrix and the $(n-k) \times (n-k)$ lower right matrix, respectively.
Explicitly,
\begin{equation}
	\hat{A}_{ij}:= a_{ij} \quad \text{and} \quad \bar{A}_{ij}:= a_{i+k, j+k}.
\end{equation}
The Bott manifold $B_n(A)$ can be geometrically interpreted as a fiber bundle over $B_k(\hat{A})$ with fiber $B_{n-k}(\bar{A})$.
The definition of $\hat{A}$ and $\bar{A}$ depends on the choice of $k$, but we will omit it from the notation when it is clear from the context.

By applying the switching operations (Lemma~\ref{lem:switch}) if necessary, we may, hence will always, assume that a Bott tower $B_n(A)$ is well-ordered (see \cite[Lemma~2.3]{Choi2015PEMS}).
That is,
\[
	\alpha_j^2=0 \implies \alpha_i^2=0 \quad \text{for } i<j.
\]
Let $k:= \max \{i \mid \alpha_i^2 =0 \}$. Then the space $B_n(A)$ has a fibration structure
\begin{equation}
	B_{n-k}(\bar{A}) \to B_n(A) \to B_k(\hat{A}).
\end{equation}
Since $ 2x_i - \alpha_i$ for $i=1, \dots, k$ generates $H^\ast(B_k(\hat{A}), \Q)$, the base space $B_k(\hat{A})$ is $\Q$-trivial.
By repeating this process for $B_{n-k}(\bar{A})$, we obtain a tower
\begin{equation}
	Q_m  \xrightarrow{p_m} Q_{m-1}  \xrightarrow{p_{m-1}} \dots  \xrightarrow{p_2} Q_1 = B_k(\hat{A}) \xrightarrow{p_1} Q_0 = \{ \text{a point} \}
\end{equation}
whose fibers are $\Q$-trivial Bott manifolds. Since each $p_i$ induces an injection on the cohomology ring, we will regard $H^2(Q_i)$ as a subgroup of $H^2(Q_m)$.
\begin{definition}
	The \emph{level} of $\eta \in H^2(B_n(A))$ is defined to be
	\begin{equation}
		\mathrm{lev}(\eta):= \min \{ j \mid \eta \in H^2(Q_j) \}.
	\end{equation}
\end{definition}
By the above construction of $\Q$-trivial towers, we have
\begin{equation}\label{eq:well-ordered}
	\mathrm{ht}(\eta) \leq \mathrm{ht}(\xi) \implies \mathrm{lev}(\eta) \leq \mathrm{lev}(\xi).
\end{equation}
We will always assume this throughout the paper. Since a graded ring isomorphism preserves the set of square zero elements, the following lemma follows from induction (cf. \cite[Proposition~4.1]{Choi-Masuda-Murai2015}).
\begin{lemma}\label{lem:stable}
	Let $\phi \colon H^\ast(B_n(A)) \to H^\ast(B_n(B))$ be a graded ring isomorphism. Then there exists a permutation $\sigma \colon [n] \to [n]$ such that
	\begin{equation}\label{eq:permutation}
		\phi(2x_i - \alpha_i) = \epsilon_i (2y_{\sigma(i)} - \beta_{\sigma(i)}) \quad \text{and} \quad \mathrm{lev}(x_i) = \mathrm{lev}(y_{\sigma(i)})
	\end{equation}
	for some $\epsilon_i \in \Q$ and $i=1, \dots, n$. In particular, the isomorphism $\phi$ is $k$-stable if $k = \dim_\C Q_j$ for some $j = 0, \dots, m$.
\end{lemma}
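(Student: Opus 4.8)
\emph{The plan is to prove this by induction on $n$}, using the $\Q$-trivial filtration, with the square-zero classes of Lemma~\ref{lem:sz} controlling the bottom layer (the base case $n\le 1$, or $B_n(A)$ $\Q$-trivial, being immediate). First I would extend $\phi$ to rational coefficients. A graded ring isomorphism carries square-zero degree-$2$ classes bijectively to square-zero degree-$2$ classes; by Lemma~\ref{lem:sz}, the well-ordering convention, and the construction of the $\Q$-trivial tower, these are exactly $\bigcup_{i=1}^{d_1}\Q(2x_i-\alpha_i)$ for the source, $d_1:=\dim_\C Q_1(A)$, and likewise $\bigcup_{q=1}^{d_1'}\Q(2y_q-\beta_q)$ for the target. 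Comparing the two collections of lines gives $d_1=d_1'$; and since $Q_1$ is $\Q$-trivial, its rational cohomology is isomorphic to that of a product of projective lines, in which a degree-$2$ class squares to zero exactly when it is a scalar multiple of a single generator, so a graded ring isomorphism $H^*(Q_1(A);\Q)\to H^*(Q_1(B);\Q)$ carries each square-zero line to a square-zero line. This produces a permutation $\sigma_1$ of $\{1,\dots,d_1\}$ and $\epsilon_i\in\Q$ with $\phi(2x_i-\alpha_i)=\epsilon_i(2y_{\sigma_1(i)}-\beta_{\sigma_1(i)})$ for $i\le d_1$, and by \eqref{eq:well-ordered} all these indices have level $1$ on both sides. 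Moreover the $\Q$-span of the square-zero lines is $F_{d_1}(A)\otimes\Q$, and since $F_{d_1}$ is a direct summand of the integral lattice $H^2$, it follows that $\phi(F_{d_1}(A))=F_{d_1}(B)$; that is, $\phi$ is $d_1$-stable.

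Being $d_1$-stable, $\phi$ passes to a graded ring isomorphism on the quotients by $(F_{d_1}(A))$ and $(F_{d_1}(B))$, which by \eqref{eq:coh} I identify with an isomorphism $H^*(B_{n-d_1}(\bar A))\to H^*(B_{n-d_1}(\bar B))$; these are again well-ordered Bott manifolds whose $\Q$-trivial filtration is the tail $Q_2\subset\cdots\subset Q_m$ of the original one, with levels shifted down by one. The inductive hypothesis (applicable since $n-d_1<n$) yields a level-preserving permutation $\bar\sigma$ of $\{d_1+1,\dots,n\}$ realizing the conclusion for the quotient, and iterating the descent shows that $\phi$ is $\dim_\C Q_j$-stable for every $j$, which is the last assertion of the lemma. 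Setting $\sigma:=\sigma_1\sqcup\bar\sigma$, a level-preserving permutation of $[n]$, and lifting $\bar\sigma$'s formula back through the quotient (using $d_1$-stability to absorb the level-$1$ parts of $\alpha_i$ and $\beta_{\sigma(i)}$), I obtain for every $i$
\[
\phi(2x_i-\alpha_i)=\epsilon_i\bigl(2y_{\sigma(i)}-\beta_{\sigma(i)}\bigr)+w_i,\qquad w_i\in F_{d_1}(B)\otimes\Q,
\]
with $w_i=0$ for $i\le d_1$.

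What remains is to show $w_i=0$ in general, and this is the only non-formal step. Applying the inductive hypothesis to the proper sub-Bott-manifold $Q_{m-1}$ (dimension $d_{m-1}<n$), on which $\phi$ restricts to an isomorphism by the stability already established, reduces me to level-$m$ indices $i$; for these $w_i\in F_{d_{m-1}}(B)\otimes\Q$ and $\phi$ is completely determined on $F_{d_{m-1}}(A)\otimes\Q=\mathrm{span}\{2x_p-\alpha_p:\mathrm{lev}(x_p)<m\}\otimes\Q$. Arguing by induction on $i$ among level-$m$ indices, $\phi(\alpha_i)$ (and hence $\phi(\alpha_i^2)=\phi(\alpha_i)^2$) is then determined—since $\alpha_i\in F_{i-1}(A)$ and every $2x_p-\alpha_p$ with $p<i$ occurring in $\alpha_i$ already has $w_p=0$—while $\phi(x_i)=\tfrac12\bigl(\phi(2x_i-\alpha_i)+\phi(\alpha_i)\bigr)$ is known up to $w_i$. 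I would then expand the defining relation $\phi(x_i)^2=\phi(\alpha_i)\phi(x_i)$ in the monomial basis $\{y_py_q\}_{p<q}$ of $H^4(B_n(B))$ and extract the coefficients of the monomials pairing the leading generator $y_{\sigma(i)}$ with a lower-level generator: using the well-ordering \eqref{eq:well-ordered} together with the $\Q$-triviality of the top layer (which forces $\alpha_i^2$ into the ideal $(F_{d_{m-1}}(A))$), these coefficient identities should leave no room for a nonzero $w_i$. This last degree-$4$ bookkeeping is the real obstacle: the squaring identity $(2x_i-\alpha_i)^2=\alpha_i^2$ is vacuous on its own, being automatically preserved by any ring homomorphism, so one must genuinely exploit that $\phi$ is a ring map on all of $H^*$; making the bookkeeping close—especially when the layer $Q_m/Q_{m-1}$ splits into several factors $\mathcal{H}_{\lambda_s}$ that $\sigma$ permutes—is where the work lies, and the same rigidity is what forbids certain permutations of a layer in the first place.
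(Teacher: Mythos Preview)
Your inductive scheme through the $\Q$-trivial filtration, using that a ring isomorphism permutes the square-zero lines of Lemma~\ref{lem:sz}, is exactly the paper's argument: its entire proof is the sentence ``Since a graded ring isomorphism preserves the set of square zero elements, the following lemma follows from induction (cf.\ \cite[Proposition~4.1]{Choi-Masuda-Murai2015}).'' You actually go further than the paper by attempting to spell out why $w_i=0$, i.e.\ why \eqref{eq:permutation} holds on the nose rather than only modulo the level below; this is precisely the content the paper offloads to the cited reference rather than proving.

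Your coefficient-extraction sketch for that last step is the right shape but, as you yourself acknowledge, not closed: when $\sigma$ scrambles the order within the top level, the relations $y_{\sigma(j)}^2=\beta_{\sigma(j)}y_{\sigma(j)}$ with $\sigma(j)>\sigma(i)$ can feed $y_{\sigma(i)}$-terms back into $\phi(\alpha_i)^2$, so simply isolating the $y_{\sigma(i)}$-row of the identity $\phi(x_i)^2=\phi(\alpha_i)\phi(x_i)$ does not immediately annihilate $w_i$. This is a genuine subtlety, not a mistake in your plan, and it is exactly why the authors cite \cite{Choi-Masuda-Murai2015} instead of writing it out. It is worth noting, however, that the parts you \emph{do} establish in full---existence of $\sigma$, the level preservation $\mathrm{lev}(x_i)=\mathrm{lev}(y_{\sigma(i)})$, and $d_j$-stability for every $j$---are all that Section~\ref{sec:proof} ever invokes: in deriving \eqref{eq:xk} the error term $w$ is only required to lie in $F_k(B)$, and since $d_{\mathrm{lev}(x_{k+1})-1}\le k$ the congruence modulo the level below already suffices.
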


\subsection{Blocks}
Let $B_n(A)$ be a Bott tower well-ordered in the sense of~\eqref{eq:well-ordered}. Pick $i$ and $j$ so that $x_i$ and $x_j$ have the same level, say $\ell$. The quotient $H^\ast(Q_{\ell})/H^\ast(Q_{\ell-1})$ is isomorphic to the cohomology ring of a $\Q$-trivial Bott manifold. Fix a graded ring isomorphism
\begin{equation}\label{eq:psi}
	\psi \colon H^\ast(Q_{\ell})/H^\ast(Q_{\ell-1}) \to \prod_i H^\ast(\mathcal{H}_{\lambda_i}).
\end{equation}
\begin{definition}
	Two numbers $i$ and $j$ are said to be \emph{in the same block for~$A$} if $\mathrm{lev}(x_i) = \mathrm{lev}(x_j)$ and there exists $k$ such that both $\psi(2x_i - \alpha_i)$ and $\psi(2x_j - \alpha_j)$ are elements of $H^2(\mathcal{H}_k)$.
\end{definition}
We remark that this definition is independent of the choice of $\psi$ because any graded automorphism of $\prod_i H^\ast(\mathcal{H}_{\lambda_i})$ must send $H^2(\mathcal{H}_{\lambda_j})$ isomorphically onto $H^2(\mathcal{H}_{\lambda_{j'}})$ for some $j'$.

\begin{lemma}\label{lem:permutation}
	Let $\phi \colon H^\ast(B_n(A)) \to H^\ast(B_n(B))$ be a graded ring isomorphism. Let $\sigma \colon [n] \to [n]$ be the permutation defined by the equation~\eqref{eq:permutation}. Then $i$ and $j$ are in the same block for $A$ if and only if $\sigma(i)$ and $\sigma(j)$ are in the same block for $B$.
\end{lemma}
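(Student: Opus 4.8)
The plan is to show that $\phi$ is compatible with the filtration of $H^\ast(B_n(A))$ by level, so that it descends to a graded ring isomorphism between the $\Q$-trivial quotients $H^\ast(Q_\ell)/H^\ast(Q_{\ell-1})$, and then to read off the conclusion from the rigidity of products of the $\mathcal{H}_{\lambda_i}$'s recorded after the definition of block. Throughout I write $Q_\ell^A,Q_\ell^B$ for the $\Q$-trivial filtrations of the two towers and $\sigma$ for the permutation of \eqref{eq:permutation}.

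First I would establish that $\phi\big(H^\ast(Q_\ell^A)\big)=H^\ast(Q_\ell^B)$ for every $\ell$. Rationally this is immediate: the square-zero classes $2x_i-\alpha_i$ with $\mathrm{lev}(x_i)\le\ell$ form a set of $\dim_\C Q_\ell^A$ linearly independent elements of $H^2(Q_\ell^A;\Q)$ and hence a basis; by \eqref{eq:permutation} of Lemma~\ref{lem:stable}, $\phi$ carries this basis to a scalar rescaling of $\{\,2y_{\sigma(i)}-\beta_{\sigma(i)} : \mathrm{lev}(y_{\sigma(i)})\le\ell\,\}$, which is a basis of $H^2(Q_\ell^B;\Q)$; and since these rings are generated in degree $2$, $\phi$ restricts to a ring isomorphism $H^\ast(Q_\ell^A;\Q)\to H^\ast(Q_\ell^B;\Q)$. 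For the integral statement I would use that $H^\ast(Q_\ell)$ is a saturated subring of $H^\ast(B_n)$ — the projection $B_n\to Q_\ell$ is a composition of projective bundle projections from the Bott tower, hence induces a split injection with torsion-free cokernel on integral cohomology — so an integral class of $B_n$ lying in $H^\ast(Q_\ell;\Q)$ already lies in $H^\ast(Q_\ell)$. Combining the cases $\ell$ and $\ell-1$, $\phi$ carries the ideal of $H^\ast(Q_\ell^A)$ generated by $H^{>0}(Q_{\ell-1}^A)$ onto the analogous ideal for $B$, so it induces a graded ring isomorphism $\bar\phi_\ell\colon H^\ast(Q_\ell^A)/H^\ast(Q_{\ell-1}^A)\to H^\ast(Q_\ell^B)/H^\ast(Q_{\ell-1}^B)$, and this map sends the class of $2x_i-\alpha_i$ to $\epsilon_i$ times the class of $2y_{\sigma(i)}-\beta_{\sigma(i)}$ whenever $\mathrm{lev}(x_i)=\ell$.

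Next I would fix $\psi_A$ and $\psi_B$ as in \eqref{eq:psi} for the two towers at level $\ell$ and pass to $\Psi:=\psi_B\circ\bar\phi_\ell\circ\psi_A^{-1}$, a graded ring isomorphism $\prod_k H^\ast(\mathcal{H}_{\lambda_k})\to\prod_{k'}H^\ast(\mathcal{H}_{\mu_{k'}})$. By the classification of $\Q$-trivial Bott manifolds~\cite{Choi-Masuda2012} the partitions $\lambda$ and $\mu$ coincide up to reordering, so after post-composing with a fixed factor-permuting isomorphism this is an automorphism, and the remark after the definition of block applies: $\Psi$ carries each $H^2(\mathcal{H}_{\lambda_k})$ isomorphically onto some $H^2(\mathcal{H}_{\mu_{k'}})$; write $\rho$ for this bijection of factors. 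For $i$ with $\mathrm{lev}(x_i)=\ell$, the element $\psi_A(2x_i-\alpha_i)$ is square-zero, and a degree-$2$ square-zero element of a product of the $\mathcal{H}_\bullet$'s lies in a single factor (if $z=\sum z_k$ then $z^2$ has the nonzero terms $z_kz_l$, $k\neq l$, in distinct summands), so $\psi_A(2x_i-\alpha_i)\in H^2(\mathcal{H}_{\lambda_{k(i)}})$ for a well-defined index $k(i)$, and $i,j$ are in the same block for $A$ exactly when $k(i)=k(j)$. Applying $\Psi$ and using the last sentence of the previous paragraph, $\psi_B(2y_{\sigma(i)}-\beta_{\sigma(i)})$ lies in $H^2(\mathcal{H}_{\mu_{\rho(k(i))}})$; hence $\sigma(i),\sigma(j)$ are in the same block for $B$ exactly when $\rho(k(i))=\rho(k(j))$, i.e.\ when $k(i)=k(j)$. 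Since moreover $\mathrm{lev}(x_i)=\mathrm{lev}(x_j)$ iff $\mathrm{lev}(y_{\sigma(i)})=\mathrm{lev}(y_{\sigma(j)})$ by Lemma~\ref{lem:stable}, the two defining conditions for being in the same block correspond, and the lemma follows.

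The step I expect to be the main obstacle is the claim that $\Psi$ permutes the factors $H^2(\mathcal{H}_\bullet)$; the remaining work is bookkeeping with the level filtration. But this is essentially the assertion already made (for automorphisms) in the remark following the definition of block, and the isomorphism case reduces to it via the classification of $\Q$-trivial Bott manifolds, so it should not require new ideas.
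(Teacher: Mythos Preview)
The paper states Lemma~\ref{lem:permutation} without proof; it is treated as an immediate consequence of the setup and of the remark following the definition of block. Your proposal is a correct and careful unpacking of exactly the argument the authors have in mind: use Lemma~\ref{lem:stable} to see that $\phi$ respects the level filtration, pass to the $\Q$-trivial quotients at each level, and then invoke the factor-permutation property recorded in the remark. The integral refinement via saturation is fine, and your observation that a nonzero degree-$2$ square-zero element of $\prod_k H^\ast(\mathcal{H}_{\lambda_k})$ must live in a single tensor factor is precisely what underlies that remark. There is no substantive difference in approach to report.
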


Next, we characterize the condition when $i$ and $j$ are in the same block for $A$ in terms of the entries of the matrix. Let $k = \dim_\C Q_{\ell-1}$ for some $\ell \geq 1$. For each $r>k$, let $z_r$ denote the image of $2x_r - \alpha_r$ under the projection
\begin{equation}\label{eq:projection}
	H^2(B_n(A)) \to H^2(B_{n-k}(\bar{A})),
\end{equation}
divided by $2$ when it is not primitive. That is, $z_r$ is primitive and $z_r^2=0$ when $\mathrm{lev}(x_r) = \ell$.
\begin{lemma}\label{lem:block}
	Pick $i,j>k$ so that $\mathrm{lev}(x_i) = \mathrm{lev}(x_j) = \ell$. Then $i$ and $j$ are in the same block for $A$ if and only if $z_i \equiv z_j \mod 2$.
\end{lemma}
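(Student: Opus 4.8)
The plan is to pass to the $\Q$-trivial fibre where the classes $z_i$ actually live and then reduce everything modulo $2$. Write $\ell$ for the common level, $k=\dim_\C Q_{\ell-1}$, and let $F$ be the fibre of $Q_\ell\to Q_{\ell-1}$; then $F=B_{n_\ell}(C)$ with $n_\ell=\dim_\C Q_\ell-k$ and $C$ the upper-left $n_\ell\times n_\ell$ submatrix of $\bar A$, and $F$ is $\Q$-trivial, say $F\cong\prod_c\mathcal{H}_{\lambda_c}$, with $H^*(F)=\mathbb{Z}[\bar x_1,\dots,\bar x_{n_\ell}]/(\bar x_s^2-\bar\alpha_s\bar x_s)$ and $\bar\alpha_s^2=0$. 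The first thing I would do is bookkeeping: the restriction to a fibre of $Q_\ell\to Q_{\ell-1}$ sends $x_t\mapsto 0$ for $t\le k$ and $x_t\mapsto\bar x_{t-k}$ for $t>k$, so for $\mathrm{lev}(x_r)=\ell$ the image of $2x_r-\alpha_r$ under the projection~\eqref{eq:projection} is $2\bar x_{r-k}-\bar\alpha_{r-k}$, an element of $H^2(F)$; after the prescribed division by $2$, $z_r$ is exactly the primitive square-zero generator of $H^*(F)$ coming from the $(r-k)$th coordinate, and it agrees with the image of $2x_r-\alpha_r$ in $H^*(Q_\ell)/H^*(Q_{\ell-1})\cong H^*(F)$. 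So the whole statement lives inside $F\cong\prod_c\mathcal{H}_{\lambda_c}$, and $i$ and $j$ are in the same block for $A$ exactly when $\psi(z_i)$ and $\psi(z_j)$ lie in $H^2(\mathcal{H}_{\lambda_c})$ for a common factor $c$.

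Next I would record a computation in the model spaces. In the standard presentation $H^*(\mathcal{H}_j)=\mathbb{Z}[w_1,\dots,w_j]/(w_1^2,(w_s^2-w_1w_s)_{s\ge 2})$, the primitive square-zero classes coming from the coordinates are $w_1$ (from $2w_1$ after dividing by $2$) and $2w_s-w_1$ for $s=2,\dots,j$, and each of them reduces to $\bar w_1$ in $H^2(\mathcal{H}_j;\mathbb{Z}/2)$. Using the block-diagonal presentation of $\prod_c\mathcal{H}_{\lambda_c}$, it follows that each coordinate primitive square-zero generator of the factor $c$ is supported on that factor and reduces mod $2$ to the bottom $\CP^1$-class $\bar v_1^{(c)}$; the classes $\bar v_1^{(c)}$ for distinct $c$ are part of a $\mathbb{Z}/2$-basis of $H^2(\prod_c\mathcal{H}_{\lambda_c};\mathbb{Z}/2)$, hence linearly independent.

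Finally I would transport this through $\psi$. Being an isomorphism of the free abelian groups $H^2$, $\psi$ preserves primitivity; since each $z_r$ is primitive and square-zero, $\psi(z_r)$ is a primitive square-zero class, so by Lemma~\ref{lem:sz} applied to $\prod_c\mathcal{H}_{\lambda_c}$ it equals $\pm$ a coordinate primitive square-zero generator of a single factor, namely the block $c(r)$ of $r$. Reducing mod $2$ gives $\bar\psi(\bar z_r)=\bar v_1^{(c(r))}$ for the induced isomorphism $\bar\psi$ on mod-$2$ cohomology, whence $z_i\equiv z_j\pmod 2\iff\bar z_i=\bar z_j\iff\bar v_1^{(c(i))}=\bar v_1^{(c(j))}\iff c(i)=c(j)$, i.e.\ $i$ and $j$ are in the same block for $A$; independence of the choice of $\psi$ is already recorded after the definition of blocks, and in any case the criterion obtained does not mention $\psi$.

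I expect the only delicate point to be the first, bookkeeping step — matching $z_r$ as defined via~\eqref{eq:projection} with a generator of $H^*(Q_\ell)/H^*(Q_{\ell-1})$ and keeping the division-by-$2$ convention straight — while the actual mathematical content is the elementary observation that within each $\mathcal{H}_j$ all coordinate square-zero generators collapse mod $2$ onto a single class, and these classes stay independent across factors.
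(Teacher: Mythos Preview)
Your argument is correct and follows essentially the same route as the paper's one-line proof: both invoke Lemma~\ref{lem:sz} together with the explicit presentation~\eqref{eq:qmatrix} and the decomposition~\eqref{eq:qtrivial} to conclude that $\psi(z_r)$ has the form $\pm x_1^{(c)}$ or $\pm(2x_s^{(c)}-x_1^{(c)})$ in a single factor, so that mod~$2$ it reduces to the bottom class of that factor. You have simply made the bookkeeping and the mod~$2$ conclusion explicit where the paper leaves them to the reader.
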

\begin{proof}
	By Lemma~\ref{lem:sz}, expression~\eqref{eq:qmatrix} and \eqref{eq:qtrivial}, the image of a primitive square zero element under~\eqref{eq:psi} must be of the form $\pm x_r$ or $\pm(2x_s - x_r)$.
\end{proof}

\begin{corollary}\label{cor:block}
	Let $A$ be a matrix defining a Bott tower with $a_{j+1,j}$ odd.
	\begin{enumerate}
		\item
			If $\mathrm{lev}(x_{j}) = \mathrm{lev}(x_{j+1})$, then $j$ and $j+1$ are in the same block.
		\item
			$i$ and $j+1$ are not in the same block for any $i<j$.
	\end{enumerate}
\end{corollary}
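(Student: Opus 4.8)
The plan is to deduce both statements from Lemma~\ref{lem:block}, reducing everything to a comparison of the elements $z_r$ modulo~$2$. Throughout, fix the relevant level $\ell$, set $k:=\dim_\C Q_{\ell-1}$, and write $\bar x_1,\dots,\bar x_{n-k}$ for the standard generators of $H^2(B_{n-k}(\bar A))$, so that $\bar x_s$ is the image of $x_{k+s}$ under~\eqref{eq:projection} and, writing $\bar\alpha_s=\sum_{t<s}a_{k+s,k+t}\bar x_t$ for the rows of $\bar A$ (as in~\eqref{eq:alpha}), the image of $\alpha_{k+s}$ is $\bar\alpha_s$. Then the image of $2x_r-\alpha_r$ is $2\bar x_{r-k}-\bar\alpha_{r-k}$, and $z_r$ is this element, divided by~$2$ when it is not primitive; since for $\mathrm{lev}(x_r)=\ell$ these are supported on $\bar x_1,\dots,\bar x_{\dim_\C Q_\ell-k}$, I will pass freely between $H^2(B_{n-k}(\bar A))$ and the $\Q$-trivial fiber $Q_\ell/Q_{\ell-1}$.

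For part~(1), assume $\mathrm{lev}(x_j)=\mathrm{lev}(x_{j+1})=\ell$. The key observation is that, as $Q_\ell/Q_{\ell-1}$ is $\Q$-trivial, the image $\bar\alpha_{j+1-k}$ of $\alpha_{j+1}$ there is a square-zero element of degree~$2$; and since $a_{j+1,j}$ is odd, hence nonzero, $\bar\alpha_{j+1-k}$ has height exactly $j-k$. By Lemma~\ref{lem:sz} it equals $\lambda(2\bar x_p-\bar\alpha_p)$ with $\lambda\neq 0$; comparing heights gives $p=j-k$, comparing the coefficient of $\bar x_{j-k}$ gives $2\lambda=a_{j+1,j}$, and comparing the remaining coefficients gives $a_{j+1,m}=-\tfrac{a_{j+1,j}}{2}\,a_{jm}$ for all $k<m<j$. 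As $a_{j+1,j}$ is odd, each such $a_{jm}$ is even; hence $2\bar x_{j-k}-\bar\alpha_{j-k}$ is divisible by~$2$ and $z_j=\bar x_{j-k}-\tfrac12\bar\alpha_{j-k}$, whereas $z_{j+1}=2\bar x_{j+1-k}-\bar\alpha_{j+1-k}$ is already primitive. Reducing modulo~$2$ and using that $a_{j+1,j}\equiv 1$, which via the displayed relation also forces $\tfrac{a_{jm}}{2}\equiv a_{j+1,m}\pmod 2$, one checks that both $z_j$ and $z_{j+1}$ reduce to $\bar x_{j-k}+\sum_{k<m<j}a_{j+1,m}\bar x_{m-k}$. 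Thus $z_j\equiv z_{j+1}\pmod2$, and Lemma~\ref{lem:block} gives that $j$ and $j+1$ lie in the same block.

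For part~(2), fix $i<j$. Numbers in the same block have the same level, so if $\mathrm{lev}(x_i)\ne\mathrm{lev}(x_{j+1})$ there is nothing to prove; assume therefore $\mathrm{lev}(x_i)=\mathrm{lev}(x_{j+1})=\ell$. Since $i<j<j+1$ and the level is non-decreasing in the index by~\eqref{eq:well-ordered}, we also have $\mathrm{lev}(x_j)=\ell$; in particular $i,j,j+1>k:=\dim_\C Q_{\ell-1}$, so Lemma~\ref{lem:block} applies. Now $z_{j+1}=2\bar x_{j+1-k}-\bar\alpha_{j+1-k}$ is primitive (as $a_{j+1,j}$ is odd) and has odd coefficient $-a_{j+1,j}$ at $\bar x_{j-k}$, while $z_i$ is a combination of $\bar x_1,\dots,\bar x_{i-k}$ only and $i-k<j-k$, so its coefficient at $\bar x_{j-k}$ is~$0$. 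Hence $z_i\not\equiv z_{j+1}\pmod2$, and Lemma~\ref{lem:block} shows that $i$ and $j+1$ are not in the same block.

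The only genuinely nontrivial step is the coefficient comparison in part~(1): one has to extract the divisibility $2\mid a_{jm}$ from the $\Q$-triviality of the fiber together with $a_{j+1,j}$ odd, and then keep precise track of the ``divide by~$2$'' normalization in the definition of $z_r$ so that the mod-$2$ reductions of $z_j$ and $z_{j+1}$ genuinely coincide. Once this is in place, part~(2) is just bookkeeping with heights and levels.
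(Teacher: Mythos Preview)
Your proof is correct and follows essentially the same route as the paper: both arguments apply Lemma~\ref{lem:sz} to the projected class $\bar\alpha_{j+1}$ to identify it as $a_{j+1,j}(\bar x_j-\tfrac12\bar\alpha_j)$, deduce that $\bar\alpha_j$ is even (equivalently, your coefficient identity $a_{j+1,m}=-\tfrac{a_{j+1,j}}{2}a_{jm}$), and then compare $z_j$ and $z_{j+1}$ modulo~$2$ via Lemma~\ref{lem:block}; your treatment simply spells out the mod-$2$ bookkeeping that the paper leaves implicit, and your indexing shifts by~$k$ where the paper keeps the original subscripts.
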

\begin{proof}
	Let $\ell = \mathrm{lev}(x_j)$. Let $\bar{x}_r$ and $\bar{\alpha}_r$ denote the image of $x_r$ and $\alpha_r$ under~\eqref{eq:projection}. By Lemma~\ref{lem:sz}, we have $\bar{\alpha}_{j+1} = a_{j+1,j}(\bar{x}_j - \bar{\alpha}_j/2)$. This implies that $\bar{\alpha}_j$ is even. Since $z_j = \bar{x}_j - \bar{\alpha}_j/2$ and $z_{j+1} = 2\bar{x}_{j+1} - \bar{\alpha}_{j+1}$, the first statement follows from Lemma~\ref{lem:block}. The second statement also follows from Lemma~\ref{lem:block}.
\end{proof}

\section{Proof}\label{sec:proof}

The following is the key lemma to prove our main theorem.
\begin{lemma}\label{lem:key}
	Let $\phi \colon H^\ast(B_n(A)) \to H^\ast(B_n(B))$ be a $k$-stable isomorphism. Let $\ell > k+1$ be the smallest integer satisfying the condition $\phi(x_{k+1}) \in F_{\ell}(B)$. Set $p:= b_{\ell, \ell-1}$. Depending on the parity of $p$, we can find an $n \times n$ matrix~$B'$ and a $k$-stable realizable graded isomorphism $g \colon H^\ast(B_n(B)) \to H^\ast(B_n(B'))$ satisfying the following properties.
	\begin{enumerate}
		\item \label{even}
			If $p$ is even, then $(g \circ \phi) (x_{k+1}) \in F_{\ell - 1}(B')$. The $i$th row of $B'$ coincides with that of $B$ for $i < \ell - 1$.
		\item \label{odd}
			If $p$ is odd, then $(g \circ \phi) (x_{k+1}) \in F_{\ell - 1}(B')$ provided that $\ell > k+2$. The $i$th row of $B'$ coincides with that of $B$ for $i < \ell - 2$.
	\end{enumerate}
\end{lemma}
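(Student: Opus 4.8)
The plan is to control the single class $\phi(x_{k+1})$ and, by composing switchings (Lemma~\ref{lem:switch}) and twistings (Lemma~\ref{lem:twist}) on the target side, kill its top component so that its height drops from $\ell$ to $\ell-1$. First I would determine the shape of $\phi(x_{k+1})$. Since $\phi$ is $k$-stable and $\alpha_{k+1}\in F_k(A)$, the class $u:=\phi(\alpha_{k+1})$ lies in $F_k(B)$; combining this with Lemma~\ref{lem:stable}, the identity $2\phi(x_{k+1})=\phi(2x_{k+1}-\alpha_{k+1})+u=\epsilon(2y_{\sigma(k+1)}-\beta_{\sigma(k+1)})+u$ (for some $\epsilon\in\Q$) forces $\sigma(k+1)=\ell$ and
\[
	\phi(x_{k+1})=c\,y_\ell+w,\qquad c:=\epsilon\in\{\pm1,\pm2\},\quad w\in F_{\ell-1}(B),
\]
where the restriction $c\in\{\pm1,\pm2\}$ comes from the fact that the lattice isomorphism $\phi$ preserves the content of a degree-$2$ class, and $|c|=2$ can occur only when $\alpha_{k+1}$ is $2$-divisible and $2y_\ell-\beta_\ell$ is primitive. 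Expanding the identity $\phi(x_{k+1})^2=u\cdot\phi(x_{k+1})$ in $H^4(B_n(B))$ in the monomial basis $\{y_iy_j\}_{i<j}$ and comparing, for each $s<\ell$, the coefficient of $y_sy_\ell$ — of which the right-hand side has none, since $u,w\in F_{\ell-1}(B)$ — yields, via Remark~\ref{rmk:zero}, the two key relations
\[
	c\,\beta_\ell=u-2w,\qquad w(u-w)=0.
\]
Reading off the $y_{\ell-1}$-coefficient of the first relation and using $u\in F_k(B)$ with $\ell-1>k$, we get $c\,p=-2\,w_{\ell-1}$, where $w_{\ell-1}$ is the $y_{\ell-1}$-coefficient of $w$; in particular, if $p$ is odd then $|c|=2$.

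Suppose first that $p$ is even. I claim there is $v\in F_{\ell-1}(B)$ with $v(\beta_\ell-v)=0$ and $v_{\ell-1}=p/2$. When $|c|=1$ one may take $v=-cw$: then $v(\beta_\ell-v)=-cw(\beta_\ell+cw)=-w(u-w)=0$ by the relations above, while $v_{\ell-1}=-c\,w_{\ell-1}=p/2$ (and in fact $\beta_\ell-2v=cu\in F_k(B)$). When $|c|=2$ one produces such a $v$ using in addition the identity $(u/2)^2=\beta_\ell^2$, a consequence of the two relations. Twisting the $\ell$-th row of $B$ by this $v$ gives, by Lemma~\ref{lem:twist}, a matrix $B'$ with $\beta'_\ell=\beta_\ell-2v$ and a realizable $g$ restricting to the identity on $F_{\ell-1}(B)$ with $g(y_\ell)=y'_\ell+v$. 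Then $b'_{\ell,\ell-1}=p-2v_{\ell-1}=0$, and the $y'_{\ell-1}$-coefficient of $(g\circ\phi)(x_{k+1})=c\,y'_\ell+(cv+w)$ equals $c\,v_{\ell-1}+w_{\ell-1}=cp/2+w_{\ell-1}=0$. Since $b'_{\ell,\ell-1}=0$, Lemma~\ref{lem:switch} applies with $j=\ell-1$; exchanging rows and columns $\ell-1$ and $\ell$ produces $B''$ and a realizable $h$ with $h(y'_\ell)=y''_{\ell-1}$, and, because the $y'_{\ell-1}$-coefficient above vanishes, $(h\circ g\circ\phi)(x_{k+1})\in F_{\ell-1}(B'')$. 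Neither operation disturbs a row of index $<\ell-1$, which is the assertion of part~(1).

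Now suppose $p$ is odd, so $|c|=2$. No twist of row $\ell$ can alter the parity of $b_{\ell,\ell-1}$, so the switch $\ell-1\leftrightarrow\ell$ is never directly available; this is why an extra row of freedom, and the hypothesis $\ell>k+2$ (which keeps all operations above level $k$), are needed. Using $(u/2)^2=\beta_\ell^2\in F_k(B)$ together with $p\neq0$, I would extract the relations $p\,b_{\ell-1,j}+2\,b_{\ell,j}=0$ for every $j$ with $k<j<\ell-1$, obtained by comparing the (vanishing) coefficient of $y_jy_{\ell-1}$ on the two sides; in particular each such $b_{\ell-1,j}$ is even, and $b_{\ell,j}=-\tfrac12 p\,b_{\ell-1,j}$. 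Consequently a twist of row $\ell-1$ by a suitable $v\in F_{\ell-2}(B)$ with $v(\beta_{\ell-1}-v)=0$ and $v_{\ell-2}=b_{\ell-1,\ell-2}/2$ produces $B'$ with $b'_{\ell-1,\ell-2}=0$ and — via the pull-back formula $\beta'_\ell=g(\beta_\ell)$ together with $b_{\ell,\ell-2}=-\tfrac12 p\,b_{\ell-1,\ell-2}$ — with $b'_{\ell,\ell-2}=0$ as well. A switch of rows and columns $\ell-2$ and $\ell-1$ then yields $B''$ with $b''_{\ell,\ell-1}=b'_{\ell,\ell-2}=0$, so that $\psi:=(h\circ g\circ\phi)\colon H^\ast(B_n(A))\to H^\ast(B_n(B''))$ is again $k$-stable with $\psi(x_{k+1})$ of height $\ell$ and $b''_{\ell,\ell-1}$ even. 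Applying the $p$-even case to $\psi$ — where, by the first relation re-derived for $\psi$, the analogue of $w_{\ell-1}$ vanishes since $b''_{\ell,\ell-1}=0$ — finishes the reduction, and no row of index $<\ell-2$ has been disturbed, giving part~(2).

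Finally, $g$ is in every case a composite of switchings and twistings, hence realizable; and since all of these operations are carried out at row and column indices exceeding $k$, each restricts to the identity on $F_k(B)$, so $g(F_k(B))=F_k(B')$ and $g\circ\phi$ remains $k$-stable. I expect the main obstacle to be the case $|c|=2$, and especially the odd case above: there the twisting vectors can no longer simply be read off from $w$ and must be manufactured out of the relations $c\beta_\ell=u-2w$ and $w(u-w)=0$ (equivalently $(u/2)^2=\beta_\ell^2$), after which one must carefully track how the reshuffling of rows $\ell-2,\ell-1,\ell$ moves the top component of $\phi(x_{k+1})$ down one step without touching the lower part of the tower; by contrast the subcase $|c|=1$, $p$ even is a clean two-move argument (one twist, one switch).
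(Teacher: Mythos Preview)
Your overall strategy — expand $\phi(x_{k+1})(\phi(x_{k+1})-\phi(\alpha_{k+1}))=0$, read off relations, then twist and switch — matches the paper's. Your $|c|=1$ even case is clean and correct. The gap is exactly where you flag it: for $|c|=2$ you never actually produce the twisting vector. In the even subcase you write ``one produces such a $v$ using in addition the identity $(u/2)^2=\beta_\ell^2$'' but do not say what $v$ is; taking $v=\beta_\ell$ gives $v_{\ell-1}=p$, not $p/2$, and no obvious modification fixes this. In the odd case you only assert ``a suitable $v\in F_{\ell-2}(B)$ with $v(\beta_{\ell-1}-v)=0$'' without exhibiting it, and your argument for $b'_{\ell,\ell-2}=0$ leans on an unstated formula for how twisting row $\ell-1$ affects row $\ell$.

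The paper sidesteps the whole $|c|$ dichotomy by one extra observation you never make: since $x_{k+1}^2$ lies in the ideal generated by $F_k(A)$, the image $(2y_\ell-\bar\beta_\ell)^2$ lies in the ideal generated by $F_k(B)$, and the square-zero classification (Lemma~\ref{lem:sz}) then forces
\[
	\bar\beta_\ell \;=\; p\Bigl(y_{\ell-1}-\tfrac12\,\bar\beta_{\ell-1}\Bigr).
\]
This structural equation does all the work that your case split tries to do. In the even case it gives directly $\tfrac{p}{2}y_{\ell-1}\bigl(\beta_\ell-\tfrac{p}{2}y_{\ell-1}\bigr)=0$, so one twists row $\ell$ by $v=\tfrac{p}{2}y_{\ell-1}$ — no reference to $c$ or $w$ needed. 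In the odd case it shows $\bar\beta_{\ell-1}$ is even, and together with the companion relations $\beta_{\ell-1}=\bar\beta_{\ell-1}-u/p$ and $\tfrac{\bar\beta_{\ell-1}}{2}\bigl(\tfrac{\bar\beta_{\ell-1}}{2}-u/p\bigr)=0$ it hands you the twist $v=\bar\beta_{\ell-1}/2$ at row $\ell-1$, killing \emph{all} of $\bar\beta'_{\ell-1}$ at once (not just the $(\ell-2)$-entry). After that, reapplying the square-zero classification to $\bar\beta'_\ell$ forces $b'_{\ell,\ell-2}=0$, and the switch at $j=\ell-2$ finishes. So the missing idea in your proposal is precisely this use of Lemma~\ref{lem:sz} to pin down the shape of $\bar\beta_\ell$; once you have it, the ad hoc constructions for $|c|=2$ become unnecessary.
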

\begin{proof}
	We will derive properties that hold regardless of the parity of $p$ and divide the cases later when needed. For each $i > k$, let
	\begin{equation}\label{eq:bar}
		\bar{\beta}_i:= \sum_{j=k+1}^{i-1} b_{ij} y_j \in H^2(B_n(B))
	\end{equation}
	denote the class obtained from $\beta_i$ by truncating the terms in $F_{k}(B)$. Since $\phi$ is $k$-stable, the image of $\alpha_k$ under $\phi$ is contained in $F_{k}(B)$. Using Lemma~\ref{lem:stable} and the assumption that $\ell$ is the smallest integer satisfying the condition $\phi(x_{k+1}) \in F_{\ell}(B)$, we may write
	\begin{equation}\label{eq:xk}
		\phi(x_{k+1}) = \epsilon (2y_{\ell} - \bar{\beta}_{\ell}) + w
	\end{equation}
	for some nonzero $\epsilon \in \Q$ and $w \in F_{k}(B)$. If $p=0$, the proof is complete by applying Lemma~\ref{lem:switch} for $j=\ell-1$.
	
	We assume $p \neq 0$ from now on. If follows from~\eqref{eq:coh} and~\eqref{eq:xk} that
	\begin{align*}
		0 =& \phi \left(x_{k+1}(x_{k+1} - \alpha_{k+1})\right) = (2\epsilon y_{\ell} - \epsilon \bar{\beta}_{\ell} + w)(2\epsilon y_{\ell} - \epsilon \bar{\beta}_{\ell} + w - \phi(\alpha_{k+1}))\\
		=& 2\epsilon y_{\ell}(2\epsilon y_{\ell} - 2\epsilon \bar{\beta}_{\ell} + 2w - \phi(\alpha_{k+1}))\\
		&+ \epsilon \bar{\beta}_{\ell}(\epsilon \bar{\beta}_{\ell} -2w + \phi(\alpha_{k+1}))\\
		&+ w(w-\phi(\alpha_{k+1})).
	\end{align*}
	We can remove all the square terms using the relation $y_i^2 = \beta_i y_i$. From the expression~\eqref{eq:bar} and the condition that $w, \phi(\alpha_{k+1}) \in F_k(B)$, all the three terms in the last expression are zero by Remark~\ref{rmk:zero}. By setting
	\begin{equation}
		u:= \frac{-2w + \phi(\alpha_{k+1})}{\epsilon},
	\end{equation}
	the first two terms lead to the relations
	\begin{equation}\label{eq:1}
		\beta_{\ell} = \bar{\beta}_{\ell} + \frac{-2w + \phi(\alpha_{k+1})}{2\epsilon} = \bar{\beta}_{\ell} + \frac{u}{2}
	\end{equation}
	and
	\begin{equation}\label{eq:2}
		\bar{\beta}_{\ell} \left(\bar{\beta}_{\ell} + u\right) = \bar{\beta}_{\ell} \left(\bar{\beta}_{\ell} + \frac{-2w + \phi(\alpha_{k+1})}{\epsilon}\right) = 0.
	\end{equation}
	Since $x_{k+1}^2$ is contained in the ideal generated by $F_{k}(A)$ in $H^\ast(B_n(A))$, using the equation~\eqref{eq:xk} and the assumption that $\phi$ is $k$-stable, we see that $(2y_{\ell} - \bar{\beta}_{\ell})^2$ is contained in the ideal generated by $F_{k}(B)$. By the classification of the square zero elements (Lemma~\ref{lem:sz}), we see that
	\begin{equation}\label{eq:3}
		\bar{\beta}_{\ell} = p\left(y_{\ell-1} - \frac{\bar{\beta}_{\ell-1}}{2}\right).
	\end{equation}
	We divide the cases by the parity of $p$.

\medskip
	\noindent\textsf{\textbf{Case 1}: $p$ is even.}

	By \eqref{eq:1} and \eqref{eq:2}, we have
	\begin{equation}\label{eq:even}
		\bar{\beta}_{\ell}(2\beta_{\ell} - \bar{\beta}_{\ell}) = 0.
	\end{equation}
	Substituting \eqref{eq:3} into \eqref{eq:even}, it follows that
	\begin{align*}
		0 &= \left(y_{\ell-1} - \frac{\bar{\beta}_{\ell-1}}{2}\right)\left(2\beta_{\ell} - py_{\ell-1} + \frac{p}{2}\bar{\beta}_{\ell-1} \right)\\
			&= y_{\ell-1} (2\beta_{\ell} - py_{\ell-1} ) - \frac{\bar{\beta}_{\ell-1}}{2}\left( 2\beta_{\ell} - 2py_{\ell-1} + \frac{p}{2}\bar{\beta}_{\ell-1} \right)\\
			&= 2y_{\ell-1} \left(\beta_{\ell} - \frac{p}{2}y_{\ell-1} \right) - \frac{\bar{\beta}_{\ell-1}}{2}\left( u - \frac{p}{2}\bar{\beta}_{\ell-1} \right),
	\end{align*}
	where the last equality follows from \eqref{eq:1} and \eqref{eq:3}. Since $\bar{\beta}_{\ell-1}$ has no $y_{\ell-1}$ term and $u \in F_{k}(B)$, both terms in the last expression vanish. Hence,
	\[
		\frac{p}{2}y_{\ell-1} \left(\beta_{\ell} - \frac{p}{2}y_{\ell-1} \right) = 0.
	\]
	By applying Lemma~\ref{lem:twist} for $j=\ell$ and $v = py_{\ell-1}/2$, we can find a new matrix $B' = (b'_{ij})$ with $b'_{\ell, \ell-1} = 0$. This reduces the proof to the case $p=0$.

\medskip
\noindent\textsf{\textbf{Case 2}: $p$ is odd.}

	Substituting \eqref{eq:3} into \eqref{eq:2}, we get
	\begin{align*}
		0 &= \left(y_{\ell-1} - \frac{\bar{\beta}_{\ell-1}}{2}\right)\left(y_{\ell-1} - \frac{\bar{\beta}_{\ell-1}}{2} + \frac{u}{p}\right)\\
			&= y_{\ell-1}\left(y_{\ell-1} - \bar{\beta}_{\ell-1} + \frac{u}{p}\right) + \frac{\bar{\beta}_{\ell-1}}{2}\left(\frac{\bar{\beta}_{\ell-1}}{2} - \frac{u}{p} \right).
	\end{align*}
	By the similar reasoning as before, it follows that
	\begin{equation}\label{eq:4}
		\beta_{\ell-1} = \bar{\beta}_{\ell-1} - \frac{u}{p}
	\end{equation}
	and
	\begin{equation}\label{eq:5}
		\frac{\bar{\beta}_{\ell-1}}{2}\left(\frac{\bar{\beta}_{\ell-1}}{2} - \frac{u}{p} \right) = 0.
	\end{equation}
	Since $p$ is odd, the class $\bar{\beta}_{\ell-1}$ is divisible by $2$ by \eqref{eq:3}. By taking $v:= \bar{\beta}_{\ell-1}/2$, we obtain $v(\beta_{\ell-1} - v) = 0$ from \eqref{eq:4} and \eqref{eq:5}. By Lemma~\ref{lem:twist}, we obtain a new matrix $B' = (b'_{ij})$ with
	\begin{equation}\label{eq:zero}
		b'_{\ell-1, k+1} = b'_{\ell-1, k+2} = \dots = b'_{\ell-1, \ell-2} = 0.
	\end{equation}
	The assumption that $\ell > k+2$ implies that \eqref{eq:zero} is not vacuous. Note that $b'_{\ell, \ell - 2}$ must be zero since $(\bar{\beta}'_{\ell})^2$ is contained in the ideal generated by $F_k(B')$.
	Applying Lemma~\ref{lem:switch} for $j= \ell - 2$, we obtain a new matrix $B'' = (b''_{ij})$ satisfying
	\begin{equation}
		b''_{\ell, \ell - 1} = b'_{\ell, \ell - 2}=0.
	\end{equation}
	This reduces the proof to the case $p=0$ and the proof is complete. Note that we used Lemma~\ref{lem:switch} for $j=\ell-2$ and $\ell-1$, and Lemma~\ref{lem:twist} for $j=\ell-1$, so the $i$th row for $i<\ell-2$ is unchanged.
\end{proof}

\begin{theorem}\label{thm:main}
	Let $k<n$ be the largest integer such that the graded ring isomorphism $\phi \colon H^\ast(B_n(A)) \to H^\ast(B_n(B))$ is $k$-stable. Then there exist $n \times n$ matrices $A'$ and $B'$, and realizable graded ring isomorphisms
	\[
		f \colon H^\ast(B_n(A')) \to H^\ast(B_n(A)) \quad \text{and} \quad g \colon H^\ast(B_n(B)) \to H^\ast(B_n(B'))
	\]
	such that the composition $\phi':= g \circ \phi \circ f$ is either $(k+1)$ or $(k+2)$-stable. In particular, if $\phi$ is realizable, so is $\phi'$.
\end{theorem}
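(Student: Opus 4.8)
The strategy is to raise the stability of $\phi$ in two stages: first we modify only the target tower, pushing the height of the image of $x_{k+1}$ downward by iterating Lemma~\ref{lem:key}; then, if this procedure stalls, we modify only the source tower, running the analogous procedure on the inverse isomorphism.

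\textbf{Stage 1.} We may assume $k\le n-2$ (otherwise $F_{k+1}(A)=H^2(B_n(A))$ and $\phi$ is already $(k+1)$-stable). By maximality of $k$, $\mathrm{ht}(\phi(x_{k+1}))>k+1$. Iterate Lemma~\ref{lem:key}: at each step we have a $k$-stable isomorphism $\psi\colon H^\ast(B_n(A))\to H^\ast(B_n(B))$, where $B$ is the current (relabelled) target matrix, and we set $\ell:=\mathrm{ht}(\psi(x_{k+1}))>k+1$ and $p:=b_{\ell,\ell-1}$. If $\ell>k+2$, or $\ell=k+2$ with $p$ even, Lemma~\ref{lem:key} replaces $B$ and supplies a $k$-stable realizable $g$ with $\mathrm{ht}\bigl((g\circ\psi)(x_{k+1})\bigr)\le\ell-1$. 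The height strictly decreases, so after finitely many steps either $\mathrm{ht}(\psi(x_{k+1}))\le k+1$ — then $\psi(F_{k+1}(A))\subseteq F_{k+1}(B)$, so $\psi$ is $(k+1)$-stable, and with $A'=A$, $f=\mathrm{id}$, $\phi'=\psi$ the theorem is proved — or we reach the \emph{stalled state} $\ell=k+2$, $p$ odd. In that state the relations \eqref{eq:xk} and \eqref{eq:3} from the proof of Lemma~\ref{lem:key} (with $\ell=k+2$, so that $\bar{\beta}_{k+2}=p\,y_{k+1}$) give
\[
\psi(x_{k+1})=\epsilon\,(2y_{k+2}-p\,y_{k+1})+w,\qquad \epsilon\in\mathbb{Z}\setminus\{0\},\ \ p\ \text{odd},\ \ w\in F_k(B).
\]

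\textbf{Stage 2.} Suppose we are in the stalled state. Applying $\psi^{-1}$ to the displayed identity and using $\psi^{-1}(F_k(B))=F_k(A)$ yields
\[
2\epsilon\,\psi^{-1}(y_{k+2})=x_{k+1}+\epsilon p\,\psi^{-1}(y_{k+1})-\psi^{-1}(w),\qquad \psi^{-1}(w)\in F_k(A).
\]
Hence as soon as $\psi^{-1}(y_{k+1})\in F_{k+2}(A)$ we also get $\psi^{-1}(y_{k+2})\in F_{k+2}(A)$ (because $F_{k+2}(A)$ is a direct summand of $H^2$), so $\psi^{-1}$, and therefore $\psi$, is $(k+2)$-stable and we are done. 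Otherwise put $q:=\mathrm{ht}(\psi^{-1}(y_{k+1}))$. Since $\psi$ is $k$-stable and, by Lemma~\ref{lem:stable}, its permutation $\sigma$ satisfies $\sigma(k+1)=k+2$ (forced by the displayed identity), we have $q=\sigma^{-1}(k+1)\ge k+2$; moreover if $q=k+2$, i.e.\ $\sigma(k+2)=k+1$, a direct check shows $\psi(x_{k+2})\in F_{k+2}(B)$, which again gives $(k+2)$-stability. So assume $q\ge k+3$ and apply Lemma~\ref{lem:key} to the $k$-stable isomorphism $\psi^{-1}\colon H^\ast(B_n(B))\to H^\ast(B_n(A))$, so that now the \emph{source} matrix is the one modified: it produces a new source matrix $A'$ and a $k$-stable realizable $h\colon H^\ast(B_n(A))\to H^\ast(B_n(A'))$ with $\mathrm{ht}\bigl((h\circ\psi^{-1})(y_{k+1})\bigr)\le q-1$, after which we replace $\psi$ by $\psi\circ h^{-1}\colon H^\ast(B_n(A'))\to H^\ast(B_n(B))$. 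Provided $h$ restricts to the identity on $F_{k+1}(A)$ (see below), the new $\psi$ still satisfies the displayed identity with the same $\epsilon,p,w$, so iterating strictly lowers $q$ until $q=k+2$ and the stage terminates. Letting $g$ be the composite of the Stage 1 target modifications and $f$ the composite of the inverses of the Stage 2 source modifications, the isomorphism $\phi'=g\circ\phi\circ f$ has the asserted stability; since $f$ and $g$ are realizable, $\phi'$ is realizable whenever $\phi$ is.

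\textbf{Main obstacle.} Everything hinges on showing that the Stage 2 application of Lemma~\ref{lem:key} fixes $x_{k+1}$ and all of $F_{k+1}(A)$. Inspecting the proof of Lemma~\ref{lem:key}, the switchings and twistings it uses touch only the coordinates $\ell-2,\ell-1,\ell$ of the source matrix (in Case~2, $p$ odd) or $\ell-1,\ell$ (in Case~1, $p$ even); with $\ell=q$ these all exceed $k+1$ unless $q=k+3$ and the relevant entry $a_{k+3,k+2}$ is odd, in which case Case~2 would swap the $(k+1)$st and $(k+2)$nd coordinates. This configuration cannot arise: if $a_{k+3,k+2}$ were odd then, by Corollary~\ref{cor:block}(2) and Lemma~\ref{lem:permutation}, $k+1$ and $k+2$ would lie in different blocks for $B$, while $b_{k+2,k+1}=p$ is odd, so Corollary~\ref{cor:block}(1) would force $\mathrm{lev}(y_{k+1})<\mathrm{lev}(y_{k+2})$; combined with $\mathrm{lev}(x_{k+1})=\mathrm{lev}(y_{k+2})$ and $\mathrm{lev}(x_{k+3})=\mathrm{lev}(y_{k+1})$ (from $\sigma(k+1)=k+2$ and $\sigma(k+3)=k+1$), this gives $\mathrm{lev}(x_{k+3})<\mathrm{lev}(x_{k+1})$, contradicting the monotonicity~\eqref{eq:well-ordered}. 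Hence for $q=k+3$ only Case~1 occurs there, touching coordinates $k+2$ and $k+3$ alone. Making this block-and-level bookkeeping precise, and verifying that the stalled identity genuinely survives Case~1, is the crux of the proof.
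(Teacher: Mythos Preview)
Your proof is correct and follows essentially the same two-stage strategy as the paper: iterate Lemma~\ref{lem:key} on the target side until $(g\circ\phi)(x_{k+1})\in F_{k+2}(B')$, and if stalled at odd $b'_{k+2,k+1}$, iterate the lemma on the inverse to push $\mathrm{ht}\bigl((g\circ\phi)^{-1}(y'_{k+1})\bigr)$ down, using the block structure to rule out the dangerous odd case at height $k+3$ on the source side.

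The only noteworthy difference is in how the block argument is run. The paper first uses maximality of $k$ together with Lemma~\ref{lem:stable} to conclude directly that $\mathrm{lev}(y'_{k+1})=\mathrm{lev}(y'_{k+2})$, whence Corollary~\ref{cor:block}(1) gives that $k+1,k+2$ are in the same block for $B'$; transferring via Lemma~\ref{lem:permutation} then forces $k+1,k+3$ into the same block for $A'$, and Corollary~\ref{cor:block}(2) yields $a'_{k+3,k+2}$ even. You argue the contrapositive: assume $a'_{k+3,k+2}$ odd, push the ``different block'' conclusion over to $B$, and derive $\mathrm{lev}(x_{k+3})<\mathrm{lev}(x_{k+1})$ from $\mathrm{lev}(y_{k+1})<\mathrm{lev}(y_{k+2})$. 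Both routes are valid and equivalent; the paper's is marginally cleaner because invoking maximality of $k$ up front makes the level equality $\mathrm{lev}(y'_{k+1})=\mathrm{lev}(y'_{k+2})$ immediate, whereas your version threads the same fact through a chain of contrapositives. Your explicit identification of ``the main obstacle'' (that the Stage~2 modifications must fix $F_{k+1}(A)$) is a point the paper leaves largely implicit in the phrase ``while keeping the $(k+1)$st row of $A$ unchanged.''
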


\begin{proof}
	Pick smallest integer $\ell$ such that $\phi(x_{k+1}) \in F_{\ell}(B)$. If $\ell = k+1$, there is nothing to prove.
	
	For $\ell > k+1$, by applying Lemma~\ref{lem:key} repeatedly, we can find $B'$ and $g \colon H^\ast(B_n(B)) \to H^\ast(B_n(B'))$ satisfying
	\begin{equation}
		(g \circ \phi)(x_{k+1}) \in F_{k+2}(B').
	\end{equation}
	If $b'_{k+2,k+1}$ is even, the proof is complete by Lemma~\ref{lem:key}.
	
	Now assume that $b'_{k+2,k+1}$ is odd. From the choice of $k$ and Lemma~\ref{lem:stable}, it follows that $\mathrm{lev}(y'_{k+1}) = \mathrm{lev}(y'_{k+2})$. By Corollary~\ref{cor:block}, we see that $k+1$ and $k+2$ are in the same block for $B'$.
	
	Next, we apply Lemma~\ref{lem:key} using
	\[
		(g \circ \phi)^{-1} \colon H^\ast(B_n(B')) \to H^\ast(B_n(A))
	\]
	while keeping the $(k+1)$st row of $A$ unchanged. Then we find $A'$ and $f^{-1} \colon H^\ast(B_n(A)) \to H^\ast(B_n(A'))$ such that
	\begin{equation}
		(g \circ \phi \circ f)^{-1}(y'_{k+1}) \in F_{k+3}(A') \quad \text{and} \quad (g \circ \phi \circ f)^{-1}(y'_{k+2}) \in F_{k+1}(A').
	\end{equation}
	If the image of $y'_{k+1}$ has height $k+2$, the proof is complete. So we assume the height is $k+3$. Since $k+1$ and $k+2$ are in the same block for $B'$, Lemma~\ref{lem:permutation} shows that $k+1$ and $k+3$ are in the same block for $A'$. By Corollary~\ref{cor:block}, the entry $a'_{k+3, k+2}$ must be even. By applying Lemma~\ref{lem:key} once again, we are done.
\end{proof}

\begin{theorem}[Strong cohomological rigidity for Bott manifolds]
Given any graded ring isomorphism $\phi \colon H^\ast(B_n(A)) \to H^\ast(B_n(B))$, there exists a diffeomorphism $\Phi \colon B_n(B) \to B_n(A)$ such that $\Phi^\ast = \phi$.
\end{theorem}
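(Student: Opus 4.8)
The plan is to combine Theorem~\ref{thm:main} with Ishida's theorem (Theorem~\ref{thm:Ishida}) through an induction, exploiting the fact that all of the isomorphisms $f$ and $g$ produced along the way are realizable. First I would record the trivial base case: if $\phi$ is already $(n-1)$-stable or $(n-2)$-stable, then Theorem~\ref{thm:Ishida} applies directly to produce the desired diffeomorphism, since a $k$-stable isomorphism is in particular realizable only after we know it is—so the real content is to reduce to this situation.

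The induction is on the quantity $n-1-k$, where $k<n$ is the largest integer for which $\phi$ is $k$-stable. If $k \ge n-2$ we are in the base case. Otherwise $k < n-2$, so $k+1 \le n-2$ and $k+2 \le n-1$; apply Theorem~\ref{thm:main} to obtain $n\times n$ matrices $A'$, $B'$ and realizable graded ring isomorphisms $f\colon H^\ast(B_n(A')) \to H^\ast(B_n(A))$ and $g\colon H^\ast(B_n(B)) \to H^\ast(B_n(B'))$ such that $\phi' := g\circ\phi\circ f$ is $(k+1)$- or $(k+2)$-stable. In either case the largest integer $k'$ for which $\phi'$ is stable satisfies $k' \ge k+1$, so $n-1-k' < n-1-k$ and the inductive hypothesis applies to $\phi'$: there is a diffeomorphism $\Phi' \colon B_n(B') \to B_n(A')$ with $(\Phi')^\ast = \phi'$. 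Here I should be slightly careful about the edge case where $\phi'$ turns out to be $(k+2)$-stable with $k+2 = n-1$, which is still covered by the base case, and the case $k+1 = n-1$, likewise.

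Now I would unwind the composition. Since $f$ is realizable, pick a diffeomorphism $F$ with $F^\ast = f$; since $g$ is realizable, pick $G$ with $G^\ast = g$. Then the composite diffeomorphism
\[
	\Phi := F \circ \Phi' \circ G \colon B_n(B) \xrightarrow{G} B_n(B') \xrightarrow{\Phi'} B_n(A') \xrightarrow{F} B_n(A)
\]
satisfies $\Phi^\ast = G^\ast \circ (\Phi')^\ast \circ F^\ast = g \circ \phi' \circ f$. Wait—I must get the order right: $\Phi^\ast = (F\circ\Phi'\circ G)^\ast = G^\ast\circ(\Phi')^\ast\circ F^\ast = g\circ\phi'\circ f = g\circ(g\circ\phi\circ f)\circ f$, which is not $\phi$; the correct bracketing is to take $\Phi := F\circ\Phi'\circ G$ with $(\Phi')^\ast = \phi'$ only after absorbing $f,g$, so instead one sets $\Psi := \Phi'$ and notes $g\circ\phi\circ f = \phi'= \Psi^\ast$, hence $\phi = g^{-1}\circ\Psi^\ast\circ f^{-1} = (G^{-1})^\ast\circ\Psi^\ast\circ (F^{-1})^\ast = (F^{-1}\circ\Psi\circ G^{-1})^\ast$, so $\Phi := F^{-1}\circ\Psi\circ G^{-1}$ works. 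This bookkeeping with inverses and composition order is the one place to be meticulous; it is routine but error-prone.

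The main obstacle is not in this section at all—it is entirely front-loaded into Theorem~\ref{thm:main} (and hence Lemma~\ref{lem:key}), whose proof is the technical heart of the paper: producing, by a controlled sequence of switching and twisting operations, new tower structures in which the stability index strictly increases while preserving realizability. Given Theorem~\ref{thm:main} and Theorem~\ref{thm:Ishida}, the final statement is a short descent argument, and the only subtlety worth stating carefully is that the induction decreases $n-1-k$ by at least one at each step (which is guaranteed since $\phi'$ is $(k+1)$- or $(k+2)$-stable, so $k'\ge k+1$) and terminates once $k'\in\{n-2,n-1\}$, at which point Theorem~\ref{thm:Ishida} delivers the diffeomorphism.
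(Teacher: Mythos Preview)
Your overall strategy---repeatedly apply Theorem~\ref{thm:main} to push the stability index up to $n-2$ or $n-1$, then invoke Theorem~\ref{thm:Ishida}---is the paper's strategy, and your bookkeeping with $F$, $G$, and inverses is fine (the second attempt, $\Phi = F^{-1}\circ\Psi\circ G^{-1}$, is the correct one). But your induction is on the wrong quantity, and this leaves a genuine gap in the base case.

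You induct on $n-1-k$, with base case $k\in\{n-2,n-1\}$, where you want to invoke Theorem~\ref{thm:Ishida} directly. Read its hypothesis again: it asks that $\phi$ be $k$-stable \emph{and realizable}. You flag the circularity yourself (``realizable only after we know it is'') but then proceed as if the hypothesis were automatic. The intended content of Ishida's theorem---consistent with the paper's use of it and with \cite{Ishida2012,Ishida2}---is that once $\phi$ is $k$-stable, its restriction $\phi_k\colon H^\ast(B_k(\hat A))\to H^\ast(B_k(\hat B))$ is a graded ring isomorphism of lower-height Bott manifolds, and if \emph{that} restriction is realizable then the diffeomorphism of the base lifts to one of the total space inducing $\phi$. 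Your induction on $n-1-k$ gives you no control over $\phi_k$; nothing you have proved tells you it comes from a diffeomorphism.

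The paper closes this gap by inducting on $n$. After repeatedly applying Theorem~\ref{thm:main} until $k\in\{n-2,n-1\}$, the restriction $\phi_k$ is an isomorphism between cohomology rings of Bott manifolds of height $k<n$, hence realizable by the inductive hypothesis; Theorem~\ref{thm:Ishida} then finishes. Your argument becomes correct if you wrap it in this outer induction on $n$; the descent on $n-1-k$ is then just the phrase ``apply Theorem~\ref{thm:main} repeatedly'' and need not be set up as a separate induction at all.
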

\begin{proof}
	The proof is by induction on $n$. Assume that the statement holds for all Bott towers of height less than $n$. Let $\phi \colon H^\ast(B_n(A)) \to H^\ast(B_n(B))$ be a graded ring isomorphism. Take the largest $k<n$ such that $\phi$ is $k$-stable and let $\phi_k$ denote the restriction of $\phi$. By applying Theorem~\ref{thm:main} repeatedly and replacing $A$, $B$, $\phi$ with $A'$, $B'$, $\phi'$, we may assume that $k=n-2$ or $k=n-1$. Now the proof is complete by Theorem~\ref{thm:Ishida}.
\end{proof}

\providecommand{\bysame}{\leavevmode\hbox to3em{\hrulefill}\thinspace}
\providecommand{\MR}{\relax\ifhmode\unskip\space\fi MR }
\providecommand{\MRhref}[2]{%
  \href{http://www.ams.org/mathscinet-getitem?mr=#1}{#2}
}
\providecommand{\href}[2]{#2}


\begin{thebibliography}{10}

\bibitem{Bott-Samelson1958}
Raoul Bott and Hans Samelson, \emph{Applications of the theory of {M}orse to
  symmetric spaces}, Amer. J. Math. \textbf{80} (1958), 964--1029. \MR{105694}

\bibitem{Buchstaber-Panov2015}
Victor~M. Buchstaber and Taras~E. Panov, \emph{Toric topology}, Mathematical
  Surveys and Monographs, vol. 204, American Mathematical Society, Providence,
  RI, 2015. \MR{3363157}

\bibitem{Choi2015PEMS}
Suyoung Choi, \emph{Classification of {B}ott manifolds up to dimension 8},
  Proc. Edinb. Math. Soc. (2) \textbf{58} (2015), no.~3, 653--659. \MR{3391366}

\bibitem{Choi-Masuda2012}
Suyoung Choi and Mikiya Masuda, \emph{Classification of {$\Bbb Q$}-trivial
  {B}ott manifolds}, J. Symplectic Geom. \textbf{10} (2012), no.~3, 447--461.
  \MR{2983437}

\bibitem{Choi-Masuda-Murai2015}
Suyoung Choi, Mikiya Masuda, and Satoshi Murai, \emph{Invariance of
  {P}ontrjagin classes for {B}ott manifolds}, Algebr. Geom. Topol. \textbf{15}
  (2015), no.~2, 965--986. \MR{3342682}

\bibitem{Choi-Masuda-Suh2010TAMS}
Suyoung Choi, Mikiya Masuda, and Dong~Youp Suh, \emph{Topological
  classification of generalized {B}ott towers}, Trans. Amer. Math. Soc.
  \textbf{362} (2010), no.~2, 1097--1112. \MR{2551516}

\bibitem{Friedman-Morgan1988}
Robert Friedman and John~W. Morgan, \emph{On the diffeomorphism types of
  certain algebraic surfaces. {I}}, J. Differential Geom. \textbf{27} (1988),
  no.~2, 297--369. \MR{925124}

\bibitem{Grossberg-Karshon1994}
Michael Grossberg and Yael Karshon, \emph{Bott towers, complete integrability,
  and the extended character of representations}, Duke Math. J. \textbf{76}
  (1994), no.~1, 23--58. \MR{1301185}

\bibitem{HKMP2020}
Sho Hasui, Hideya Kuwata, Mikiya Masuda, and Seonjeong Park,
  \emph{Classification of toric manifolds over an {$n$}-cube with one vertex
  cut}, Int. Math. Res. Not. IMRN (2020), no.~16, 4890--4941. \MR{4139029}

\bibitem{HK_arxiv}
Akihiro Higashitani and Kazuki Kurimoto, \emph{Cohomological rigidity for
  {F}ano {B}ott manifolds}, Math. Z., to appear, arXiv:2008.05811.

\bibitem{Hirzebruch1951}
Friedrich Hirzebruch, \emph{\"{U}ber eine {K}lasse von
  einfachzusammenh\"{a}ngenden komplexen {M}annigfaltigkeiten}, Math. Ann.
  \textbf{124} (1951), 77--86. \MR{45384}

\bibitem{Ishida2012}
Hiroaki Ishida, \emph{Filtered cohomological rigidity of {B}ott towers}, Osaka
  J. Math. \textbf{49} (2012), no.~2, 515--522. \MR{2945760}

\bibitem{Ishida2}
\bysame, \emph{Strong cohomological rigidity of {H}irzebruch surface bundles in
  {B}ott towers}, arXiv:2111.07299, 2021.

\bibitem{Masuda-Panov2008}
M.~Masuda and T.~E. Panov, \emph{Semi-free circle actions, {B}ott towers, and
  quasitoric manifolds}, Mat. Sb. \textbf{199} (2008), no.~8, 95--122.
  \MR{2452268}

\bibitem{Masuda2008Adv}
Mikiya Masuda, \emph{Equivariant cohomology distinguishes toric manifolds},
  Adv. Math. \textbf{218} (2008), no.~6, 2005--2012. \MR{2431667}

\bibitem{Masuda-Suh2008}
Mikiya Masuda and Dong~Youp Suh, \emph{Classification problems of toric
  manifolds via topology}, Toric topology, Contemp. Math., vol. 460, Amer.
  Math. Soc., Providence, RI, 2008, pp.~273--286. \MR{2428362}

\end{thebibliography}
\end{document}